\pdfoutput=1
\RequirePackage{ifpdf}
\ifpdf
\documentclass[pdftex]{sigma}
\else
\documentclass{sigma}
\fi

\numberwithin{equation}{section}

\newtheorem{Theorem}{Theorem}[section]
\newtheorem{Corollary}[Theorem]{Corollary}
\newtheorem{Lemma}[Theorem]{Lemma}
\newtheorem{Proposition}[Theorem]{Proposition}
 { \theoremstyle{definition}
\newtheorem{Definition}[Theorem]{Definition}
\newtheorem{Example}[Theorem]{Example}
\newtheorem{Remark}[Theorem]{Remark} }

\newcommand{\xtil}{\widetilde{X}}

\newcommand{\ytil}{\widetilde{Y}}
\newcommand{\lra}{\longrightarrow}
\newcommand{\lla}{\longleftarrow}
\newcommand{\PP}{\mathbb P}
\newcommand{\QQ}{\mathbb Q}
\newcommand{\ZZ}{\mathbb Z}

\newcommand{\ql}{{\QQ}_{\ell}}
\newcommand{\ms}{{\mathbb G}_m}
\newcommand{\h}{\operatorname{ht}}
\newcommand{\bgam}{\boldsymbol{\gamma}}

\newcommand{\co}{\mathcal{O}}

\begin{document}

\newcommand{\arXivNumber}{1805.04233}

\renewcommand{\thefootnote}{}

\renewcommand{\PaperNumber}{097}

\FirstPageHeading

\ShortArticleName{A Note on the Formal Groups of Weighted Delsarte Threefolds}

\ArticleName{A Note on the Formal Groups\\
of Weighted Delsarte Threefolds\footnote{This paper is a~contribution to the Special Issue on Modular Forms and String Theory in honor of Noriko Yui. The full collection is available at \href{http://www.emis.de/journals/SIGMA/modular-forms.html}{http://www.emis.de/journals/SIGMA/modular-forms.html}}}

\Author{Yasuhiro GOTO}

\AuthorNameForHeading{Y.~Goto}

\Address{Department of Mathematics, Hokkaido University of Education,\\
1-2 Hachiman-cho, Hakodate 040-8567 Japan}
\Email{\href{mailto:goto.yasuhiro@h.hokkyodai.ac.jp}{goto.yasuhiro@h.hokkyodai.ac.jp}}

\ArticleDates{Received May 11, 2018, in final form August 30, 2018; Published online September 12, 2018}

\Abstract{One-dimensional formal groups over an algebraically closed field of positive cha\-rac\-teristic are classified by their height. In the case of $K3$ surfaces, the height of their formal groups takes integer values between~$1$ and~$10$, or~$\infty$. For Calabi--Yau threefolds, the height is bounded by~$h^{1,2}+1$ if it is finite, where~$h^{1,2}$ is a Hodge number. At present, there are only a limited number of concrete examples for explicit values or the distribution of the height. In this paper, we consider Calabi--Yau threefolds arising from weighted Delsarte threefolds in positive characteristic. We describe an algorithm for computing the height of their formal groups and carry out calculations with various Calabi--Yau threefolds of Delsarte type.}

\Keywords{Artin--Mazur formal groups; Calabi--Yau threefolds; weighted Delsarte varieties}

\Classification{14L05; 14J32}

\begin{flushright}
\begin{minipage}{80mm}
\it Dedicated to Professor Noriko Yui, from whom I learned, as a graduate student and a collaborator now, enthusiasm, dynamics and humanity in mathematical research.
\end{minipage}
\end{flushright}

\renewcommand{\thefootnote}{\arabic{footnote}}
\setcounter{footnote}{0}

\section{Introduction}

Let $k$ be an algebraically closed field of characteristic $p >0$. Let $X$ be a Calabi--Yau threefold over $k$, by which we mean a~smooth projective variety over $k$ of dimension $3$ with a trivial canonical sheaf and $\dim H^1(X, \co_X )=\dim H^2(X, \co_X )=0$. In \cite{AM}, Artin and Mazur defined a~functor~$\Phi_X$ for $X$ on the category of finite local $k$-algebras $A$ with residue field $k$ by
\begin{gather*}
\Phi_X (A)=\ker \big(H_{\rm et}^3(X_A,\ms )\lra H_{\rm et}^3(X,\ms )\big),
\end{gather*}
where $X_A=X\times \operatorname{Spec} A$ and $\ms$ is the sheaf of multiplicative groups. It is proved in \cite{AM} that this functor (more generally, those for Calabi--Yau varieties of any dimension) is representable by a smooth formal group of dimension equal to the geometric genus of~$X$, which is one in our case. By abuse of notation, we also use $\Phi_X$ for this formal group and called it the {\it $($Artin--Mazur$)$ formal group} of $X$.

A formal group in positive characteristic $p$ is endowed with the multiplication-by-$p$ map. The $p$-rank of its kernel is called the {\it height} of $\Phi_X$ and denoted by $h:=\h \Phi_X$, namely
\begin{gather*}
p^h= \# \ker ([p]\colon \Phi_X \lra \Phi_X ).
\end{gather*}
It is known that one-dimensional formal groups in positive characteristic are determined up to isomorphism by the height. In the case of $K3$ surfaces, the height takes integer values between~$1$ and~$10$, or~$\infty$ (cf.~\cite{Ar,G1,Y1}). In the case of Calabi--Yau threefolds, it is proved in~\cite{GK} that $h$ is bounded above by~$h^{1,2}+1$ if $h\neq \infty$, where $h^{1,2}$ is a~Hodge number of~$X$. Note that it is still a~conjecture that~$h^{1,2}$ is bounded for Calabi--Yau threefolds.

In this paper, we consider Calabi--Yau threefolds arising from weighted projective hypersurfaces of Delsarte type and give an algorithm to compute the height of their Artin--Mazur formal groups. The algorithm involves only combinatorial argument so that one can actually do calculations to find numerical data for the height of Calabi--Yau threefolds. Among others, we find a~Calabi--Yau threefold with height $42$ or $82$ in some characteristic.

\section{Quotient maps and the height of formal groups}\label{section2}

In this section, we show that the formal groups are invariant under the quotient map by a~group of {\it symplectic} automorphisms (by which we mean the automorphisms that preserve the dualizing sheaf of a variety). The Calabi--Yau threefolds we consider in Sections~\ref{section4} and~\ref{section5} often have large groups of symplectic automorphisms. In this section, we focus our attention to a very special case, namely, orbifold Calabi--Yau threefolds $(X,Y)$ which form a~mirror pair (i.e., $h^{1,1}(X)=h^{2,1}(Y)$ and $h^{2,1}(X) =h^{1,1}(Y)$), and discuss their formal groups briefly. More details about relationships between quotient maps and formal groups will be discussed elsewhere.

\begin{Lemma} \label{mirror1} Let $X$ be a Calabi--Yau threefold over $k$ and $G$ be a finite group acting on $X$ symplectically $($i.e., preserving the dualizing sheaf of~$X)$. Write $Y:=X/G$. Assume that~$p$ is coprime to the order of~$G$ and that there exists a~crepant resolution~$\ytil$ of~$Y$. Then~$\ytil$ is a~Calabi--Yau threefold with $\Phi_X \cong \Phi_{\ytil}$. In particular, the formal groups of $X$ and $\ytil$ have the same height: $\h \Phi_X = \h \Phi_{\ytil}$.
\end{Lemma}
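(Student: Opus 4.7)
The plan is to split the argument into two independent parts: first verify that $\ytil$ is a Calabi--Yau threefold, and then establish $\Phi_X \cong \Phi_{\ytil}$ by going through an intermediate object $\Phi_Y$ defined by the same Artin--Mazur formula applied to the singular quotient $Y$.

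For the Calabi--Yau claim, the symplectic hypothesis produces a $G$-invariant generator of $\omega_X$ which descends to a trivialization of $\omega_Y$ on the smooth locus; crepancy of $f\colon \ytil \to Y$ then yields $\omega_{\ytil} \cong \co_{\ytil}$. For the vanishing of $h^{0,1}$ and $h^{0,2}$, I would exploit the coprimality $p \nmid |G|$ twice: first, the averaging projector $\frac{1}{|G|}\sum_{g\in G} g^*$ identifies $H^i(Y, \co_Y) \cong H^i(X, \co_X)^G$, which is zero for $i=1,2$ by the Calabi--Yau hypothesis on $X$; second, tame quotient singularities are rational, so $H^i(\ytil, \co_{\ytil}) \cong H^i(Y, \co_Y)$, and both vanish.

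For the formal-group isomorphism, the first step is $\Phi_X \cong \Phi_Y$. The pullback $\pi^*$ and the trace map $\pi_*$, both well-behaved since $p \nmid |G|$, identify $\Phi_Y$ with the $G$-invariant formal subgroup $\Phi_X^G$. The symplectic condition forces the $G$-action on the one-dimensional tangent space $T_0 \Phi_X \cong H^3(X, \co_X)$ to be trivial (it is generated by the Serre dual of the $G$-invariant canonical form). Since $\Phi_X$ is one-dimensional and $\Phi_X^G$ is a smooth formal subgroup with the same tangent space, $\Phi_X^G = \Phi_X$, yielding $\Phi_Y \cong \Phi_X$.

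The final step is $\Phi_Y \cong \Phi_{\ytil}$, which I would approach via the Leray spectral sequence for $f_* \ms$: rationality of the singularities of $Y$, codimension at least two of the exceptional locus, and control over the fibre cohomology of $\ms$ should force the resulting map in degree three to be an isomorphism on the kernels defining the two formal groups. The equality of heights is then immediate. The step I expect to be the main obstacle is this last one: the rational-singularity statements available off the shelf concern $\co$ rather than $\ms$, so some extra input is needed to handle the multiplicative coefficients. A more robust alternative would bypass $\Phi_Y$ entirely and compare the $F$-crystals $H^3_{\mathrm{crys}}(X/W)$ and $H^3_{\mathrm{crys}}(\ytil/W)$ via the correspondence $X \times_Y \ytil$, recovering $\Phi$ as the slope-less-than-one summand; this shifts the difficulty onto controlling the integral $F$-crystal structure through the correspondence, but avoids the delicate étale analysis at the singular locus.
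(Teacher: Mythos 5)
Your overall architecture---factor through the singular quotient $Y$, use tameness of the $G$-action to get rational singularities, use the symplectic condition to control $\omega_X$ and the structure sheaf---is exactly the paper's. But the two comparison steps you leave open, $\Phi_Y\cong\Phi_{\ytil}$ and $\Phi_X\cong\Phi_Y$, are not incidental technicalities to be cleaned up later: they are the entire content of the proof, and the paper closes both of them with a single citation. Stienstra's Theorem~3.1 (together with paragraph~3.10) of \cite{St} is precisely the ``extra input'' you correctly suspect is needed to pass from statements about $\co$ to statements about $\ms$: it shows that the Artin--Mazur functor of a proper variety is computed from coherent/Witt-vector data, so that a morphism inducing an isomorphism on the relevant structure-sheaf cohomology (a resolution of rational singularities on one side, the tame quotient map on the other) induces an isomorphism of the formal groups. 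Without that theorem, or the crystalline comparison you mention but do not carry out, the argument is incomplete; in particular your Leray-spectral-sequence plan for $f_*\ms$ does not obviously terminate, since $R^qf_*\ms$ for a resolution of threefold singularities is not controlled by rationality alone, which is exactly the obstacle you flag.

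Two further points. For $\Phi_X\cong\Phi_Y$ you propose identifying $\Phi_Y$ with $\Phi_X^G$ and then showing a prime-to-$p$ automorphism of a one-dimensional formal group acting trivially on the tangent space is trivial. The linearization step is fine, but the identification $\Phi_Y\cong\Phi_X^G$ is itself delicate: $X\to Y$ is ramified along the fixed loci (that is where the singularities of $Y$ come from), so this is not a formal Hochschild--Serre consequence for $\ms$-coefficients and again requires the reduction to coherent data. The paper avoids this entirely: it uses the symplectic hypothesis and the triviality of $\omega_X$ to verify that the quotient map satisfies the hypothesis of Stienstra's theorem (via $\omega_X^G\cong\omega_X$ and $\co_Y\cong f_*\co_X^G\cong f_*\omega_X^G$), and then applies that theorem a second time. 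Your verification that $\ytil$ is Calabi--Yau (descent of the invariant volume form plus crepancy, and vanishing of $h^{0,1}$ and $h^{0,2}$ by averaging and rationality) is correct and is essentially what the paper leaves implicit.
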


\begin{proof}Since $p$ is coprime to the order of $G$, $Y$ has at most rational singularities and by Theorem~3.1 (and the paragraph~3.10) of~\cite{St}, we have $\Phi_Y \cong \Phi_{\ytil}$ for a crepant resolution~$\ytil$ of~$Y$. Write $f\colon X \lra Y$ for the quotient map. As $G$ acts on $X$ symplectically, $g^{*} \omega_X\cong \omega_X$ for every $g\in G$ and thus $\omega_X^G \cong \omega_X$ for the dualizing sheaf $\omega_X$ of $X$. Since $X$ is Calabi--Yau, we find $\co_Y \cong f_{*}\co_X^G \cong f_{*} \omega_X^G \cong f_{*} \omega_X \cong f_{*}\co_X$. Again by Theorem 3.1 of \cite{St}, we see $\Phi_X \cong \Phi_Y$. Hence $\Phi_X \cong \Phi_{\ytil}$.
\end{proof}

\begin{Proposition} \label{mirror2}Let $X$ be a threefold over $k$ with $\co_X \cong \omega_X$ and $G$ be a~finite group acting on~$X$ symplectically. Write $\xtil$ and~$\ytil$ for crepant resolutions of~$X$ and~$Y$, respectively, constructed as in the following diagram:
\begin{gather} \label{dgm-1}
\begin{array}{@{}clclc}
X& & \lla & \xtil \\
\downarrow & & & & \\
X/G & =Y & \lla & \ytil.
\end{array}
\end{gather}
Write $h:= \h \Phi_{\xtil}$. Assume that $\big(\xtil ,\ytil \big)$ is a mirror pair of Calabi--Yau threefolds and $h$ is finite. Then
\begin{gather*}
h \leq \min \big\{ h^{1,1}\big(\xtil\big)+1,\, h^{1,2}\big(\xtil\big)+1 \big\}.
\end{gather*}
\end{Proposition}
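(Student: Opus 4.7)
The plan is to reduce the proposition to (i) a formal-group isomorphism $\Phi_{\xtil}\cong\Phi_{\ytil}$, (ii) the general Calabi--Yau threefold height bound $h\leq h^{1,2}+1$ from \cite{GK}, and (iii) the numerical part of the mirror hypothesis.

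First I would argue that $\xtil$ and $\ytil$ share the same formal group, by pushing the reasoning of Lemma~\ref{mirror1} through the diagram~\eqref{dgm-1}. Since $\xtil\to X$ is a crepant resolution of a threefold with $\co_X\cong\omega_X$, Theorem~3.1 of~\cite{St} (applied as in the proof of Lemma~\ref{mirror1}) yields $\Phi_{\xtil}\cong\Phi_X$. Because $G$ acts symplectically on $X$ and (implicitly) $p\nmid |G|$, the same chain of identifications $\co_Y\cong f_{*}\co_X^G\cong f_{*}\omega_X^G\cong f_{*}\omega_X\cong f_{*}\co_X$ that appears in the proof of Lemma~\ref{mirror1} forces $\Phi_X\cong\Phi_Y$; and a final application of~\cite{St} to the crepant resolution $\ytil\to Y$ gives $\Phi_Y\cong\Phi_{\ytil}$. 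Stringing these together we obtain $\Phi_{\xtil}\cong\Phi_{\ytil}$, so in particular $\h\Phi_{\xtil}=\h\Phi_{\ytil}=h$.

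Next I would invoke the Calabi--Yau threefold bound cited in the introduction: for any Calabi--Yau threefold $Z$ with finite formal-group height, $\h\Phi_Z\leq h^{1,2}(Z)+1$. Applied to $\xtil$ this gives $h\leq h^{1,2}(\xtil)+1$ directly, and applied to $\ytil$ it gives $h\leq h^{1,2}(\ytil)+1$. The mirror hypothesis supplies $h^{1,2}(\ytil)=h^{1,1}(\xtil)$, so the second bound reads $h\leq h^{1,1}(\xtil)+1$. Taking the minimum of the two bounds yields the asserted inequality.

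The only non-routine step is the first one: transferring Lemma~\ref{mirror1} to the present setting where $X$ itself need not be smooth. This requires checking that Stienstra's comparison result still applies to the crepant resolutions $\xtil\to X$ and $\ytil\to Y$, and that the symplectic-invariance argument $\omega_X^G\cong\omega_X$ goes through on the possibly singular $X$. Once that comparison is in hand, the remainder of the argument is a short combination of known bounds and the mirror numerical identity.
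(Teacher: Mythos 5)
Your proposal is correct and follows essentially the same route as the paper: identify $\h\Phi_{\xtil}=\h\Phi_{\ytil}$ via Lemma~\ref{mirror1}, apply the bound $h\leq h^{1,2}+1$ of~\cite{GK} to both members of the mirror pair, and use $h^{1,2}\big(\ytil\big)=h^{1,1}\big(\xtil\big)$ to convert the second bound. Your extra care in rechecking Stienstra's comparison for the possibly singular $X$ is a reasonable elaboration of the paper's bare citation of Lemma~\ref{mirror1}, but it does not change the argument.
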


\begin{proof}Note that $h= \h \Phi_{\xtil} =\h \Phi_{\ytil}$ by Lemma~\ref{mirror1}. Since $h$ is finite, it follows from Corollary~2.3 of~\cite{GK} that
\begin{gather*}
h \leq h^{1,2}\big(\xtil\big)+1 \qquad \text{and} \qquad h \leq h^{1,2}\big(\ytil\big)+1.
\end{gather*}
As $\big(\xtil,\ytil\big)$ is a mirror pair of Calabi--Yau threefolds, we find $h^{1,2}\big(\ytil\big)+1=h^{1,1}\big(\xtil\big)+1$. Combining these relations, we obtain the asserted inequality.
\end{proof}

\begin{Remark}It is still an open problem whether or not $h^{1,1}$ or $h^{1,2}$ is bounded for Calabi--Yau threefolds.
\end{Remark}

In later sections of this paper, we see examples of Calabi--Yau threefolds with many symplectic automorphisms. One may then expect to construct their mirror partners by using some group actions~$G$ as in Proposition~\ref{mirror2}. In some cases, however, we do not find such $G$ as described in the following statement; a concrete example for this can be found in Example~\ref{ex6-3}.

\begin{Corollary} \label{mirror3} Let $\xtil$ be a Calabi--Yau threefold defined as a crepant resolution of $X$ and write $h= \h \Phi_{\xtil}$. Assume that $h > \min \big\{ h^{1,1}\big(\xtil\big)+1,\,
h^{1,2}\big(\xtil\big)+1 \big\}$. Then either $h=\infty$ or there exists no group $G$ of symplectic automorphisms on $X$ such that $\big(\xtil,\ytil\big)$ of Proposition~{\rm \ref{mirror2}} forms a mirror pair.
\end{Corollary}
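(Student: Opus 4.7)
The plan is to argue by contraposition, as the statement is essentially the logical negation of the hypothesis of Proposition~\ref{mirror2}. First I would assume, for contradiction, that the two disjuncts in the conclusion both fail: namely that $h$ is finite \emph{and} that there does exist a finite group $G$ of symplectic automorphisms on $X$ for which the pair $\big(\xtil,\ytil\big)$ built from the diagram~\eqref{dgm-1} is a mirror pair of Calabi--Yau threefolds.

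Next I would verify that all the hypotheses of Proposition~\ref{mirror2} are in force. The variety $X$ already satisfies $\co_X\cong\omega_X$ (implicit, since $\xtil$ is a Calabi--Yau crepant resolution of $X$), the group $G$ acts symplectically by assumption, and the resolutions in the diagram produce Calabi--Yau threefolds because $\big(\xtil,\ytil\big)$ is assumed to be a mirror pair. With $h=\h\Phi_{\xtil}$ finite, Proposition~\ref{mirror2} applies and delivers
\begin{gather*}
h \leq \min\big\{ h^{1,1}\big(\xtil\big)+1,\, h^{1,2}\big(\xtil\big)+1\big\}.
\end{gather*}
This directly contradicts the standing hypothesis $h > \min\big\{h^{1,1}(\xtil)+1,\,h^{1,2}(\xtil)+1\big\}$, completing the contrapositive.

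There is no real obstacle here; the corollary is purely a restatement of Proposition~\ref{mirror2} in contrapositive form, and the only thing to be careful about is the logical bookkeeping of the ``either/or'' in the conclusion, which is why the proof must begin by negating \emph{both} possibilities simultaneously. The substance of the argument lies entirely in Proposition~\ref{mirror2} and Lemma~\ref{mirror1}, which are already established.
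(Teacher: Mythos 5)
Your proof is correct and matches the paper's argument, which is literally ``Take the contrapositive of Proposition~\ref{mirror2}.'' Your more explicit logical bookkeeping (negating both disjuncts and checking the hypotheses of the proposition) is just a fleshed-out version of the same one-line proof.
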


\begin{proof}Take the contrapositive of Proposition \ref{mirror2}.
\end{proof}

\section{Weighted Delsarte varieties}\label{section3}

In order to compute the cohomology groups of weighted Delsarte varieties, we explain some geometric properties of them (see also~\cite{G}).

Let $Q=(q_0,\dots,q_n)$ be an $(n+1)$-tuple of positive integers such that $p\nmid q_i$, $0\leq i\leq n$, and $\gcd (q_0,\dots , \hat{q_i},\dots ,q_n)=1$ for every $0\leq i \leq n$, where $\hat{q_i}$ means that $q_i$ is omitted. The weighted projective $n$-space over $k$ of type $Q$, denoted by $\PP^n(Q)$, is the projective variety $\PP^n(Q):=\operatorname{Proj} k[x_0,\dots ,x_n]$, where the polynomial algebra is graded by the condition $\deg (x_i)=q_i$ for $0\leq i\leq n$ (cf.~\cite{Dol}).

Let $m$ be a positive integer such that $p\nmid m$. Let $A= (a_{ij})$ be an $(n+1)\times (n+1)$ matrix of integer entries having the properties
\begin{enumerate}\itemsep=0pt
\item[(i)] $a_{ij}\geq 0$ and $ p\nmid a_{ij}$ for every $(i,j)$ with $a_{ij}\neq 0$,
\item[(ii)] $p\nmid \det A$,
\item[(iii)] $\sum\limits_{j=0}^{n} q_ja_{ij} =m$ for $0\leq i\leq n$,
\item[(iv)] given $j$, $a_{ij}=0$ for some $i$.
\end{enumerate}
We define an $(n-1)$-dimensional {\it weighted Delsarte variety in $\PP^n(Q)$ of degree $m$ with matrix~$A$} (cf.~\cite{Del,G,S}) to be the weighted projective hypersurface,~$X_A$, defined by
\begin{gather*}
\sum_{i=0}^{n} x_0^{a_{i0}}x_1^{a_{i1}}\cdots x_n^{a_{in}} =0 \subset \PP^n(Q) .
\end{gather*}
When $A$ is a diagonal matrix, the equation has the form
\begin{gather*}
x_0^{d_0} +x_1^{d_1} + \cdots +x_n^{d_n} =0
\end{gather*}
and we call it a {\it weighted Fermat variety of degree~$d$}.

Weighted Delsarte varieties are birational to finite quotients of Fermat varieties and many properties of their cohomology groups can be extracted from those of Fermat varieties (cf.~\cite{G,SK,Y2}). For instance, write $d=|\det A|$ and let $F_d$ be the $(n-1)$-dimensional Fermat variety of degree~$d$ in the usual projective space $\PP^n$:
\begin{gather*}
F_d\colon \ y_0^d +y_1^d + \cdots +y_n^d =0.
\end{gather*}
Write $\mu_d$ for the group of $d$-th roots of unity in $k^{\times}$, which consists of $d$ elements as $p\nmid d$. Set $\Gamma =(\mu_d \times \cdots \times \mu_d )/(\text{diagonal elements})$ for the product of~$n+1$ copies of $\mu_d$ modulo diagonal elements and define
\begin{gather} \label{ga}
{\Gamma}_A=\left\{ \bgam =\left(\prod_{j=0}^n \gamma_j^{a_{0j}}, \prod_{j=0}^n \gamma_j^{a_{1j}},\dots ,\prod_{j=0}^n \gamma_j^{a_{nj}} \right) \in \Gamma \,\Bigg|\, (\gamma_0,\dots ,\gamma_n )\in \mu_d^{n+1} \right\} .
\end{gather}
Then ${\Gamma}_A$ is a subgroup of $\Gamma$ and it acts on~$F_d$ by
\begin{gather*}
\bgam \cdot (y_0:y_1:\cdots :y_n)
=\left( \left( \prod_{j=0}^n \gamma_j^{a_{0j}} \right)y_0: \left( \prod_{j=0}^n \gamma_j^{a_{1j}} \right)y_1:\cdots : \left( \prod_{j=0}^n \gamma_j^{a_{nj}} \right)y_n\right)
\end{gather*}
for $\bgam \in {\Gamma}_A$ and $(y_0:y_1:\cdots :y_n)\in F_d$.

\begin{Lemma} \label{quot}Let $X_A$ be a weighted Delsarte variety in $\PP^n(Q)$ with matrix~$A$. Then $X_A$ is birational to the quotient $F_d/\Gamma_A$.
\end{Lemma}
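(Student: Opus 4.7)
The plan is to construct an explicit dominant rational map $\phi\colon F_d \dashrightarrow X_A$ via monomials, check that it is $\Gamma_A$-invariant, and then show that the induced map $\bar\phi\colon F_d/\Gamma_A \dashrightarrow X_A$ is birational. Setting $C:=d A^{-1}$, the adjugate formula shows that $C$ has integer entries (as $d=|\det A|$) and satisfies $AC=CA=dI$. I define
\begin{gather*}
\phi\colon \PP^n \dashrightarrow \PP^n(Q), \qquad (y_0:\cdots :y_n)\ \mapsto\ \biggl(\prod_{j=0}^n y_j^{c_{0j}}\ :\ \cdots\ :\ \prod_{j=0}^n y_j^{c_{nj}}\biggr).
\end{gather*}
Well-definedness as a rational map into the weighted projective space is verified by computing the $i$-th row sum of $C$: multiplying the vector identity $A(q_0,\dots ,q_n)^T=m(1,\dots,1)^T$ from property~(iii) on the left by $C$ yields $\sum_j c_{ij}=(d/m)\,q_i$, which is the homogeneity condition needed for a monomial map into $\PP^n(Q)$. (Any negative entries of $C$ can be cleared by multiplying all coordinates by a common monomial in the $y_j$'s.)

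The key calculation is the pull-back of the Delsarte equation: for each $i$,
\begin{gather*}
\prod_{j=0}^n \phi(y)_j^{a_{ij}} = \prod_{j,k} y_k^{a_{ij}c_{jk}} = \prod_k y_k^{(AC)_{ik}} = y_i^d,
\end{gather*}
so summing over $i$ turns the Delsarte polynomial into the Fermat polynomial $y_0^d+\cdots+y_n^d$. Hence $\phi$ restricts to a dominant rational map $F_d \dashrightarrow X_A$. For $\Gamma_A$-invariance, if $\bgam \in \Gamma_A$ is represented by $(\gamma_0,\dots,\gamma_n)\in \mu_d^{n+1}$, a parallel manipulation using $CA=dI$ gives $\phi(\bgam \cdot y)_i=\gamma_i^d\,\phi(y)_i=\phi(y)_i$, so $\phi$ descends to $\bar{\phi}\colon F_d/\Gamma_A \dashrightarrow X_A$.

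Since $F_d/\Gamma_A$ and $X_A$ both have dimension $n-1$, it remains to show that $\deg\phi=|\Gamma_A|$, which will force $\bar\phi$ to be birational. My approach is to work on the ambient torus: $\phi$ is covered by the monomial homomorphism $(k^{\times})^{n+1}\to (k^{\times})^{n+1}$ with exponent matrix $C$, whose kernel has order $|\det C|=d^{n+1}/|\det A|=d^n$. After passing to the torus quotients inside $\PP^n$ and $\PP^n(Q)$ and restricting to $F_d$, the generic fiber of $\phi$ over $X_A$ should be a single $\Gamma_A$-orbit. The main obstacle is making this last identification precise: one must show that the character-lattice kernel of the induced map of torus quotients matches~$\Gamma_A$ as defined in~\eqref{ga}, with its diagonal quotient carefully taken into account. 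This is a standard but somewhat delicate lattice computation, and it is where the combinatorial content of the lemma really resides.
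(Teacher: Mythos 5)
Your proposal takes essentially the same route as the paper, whose entire proof consists of writing down the monomial map $f(y)=\bigl(\prod_j y_j^{b_{0j}}:\cdots:\prod_j y_j^{b_{nj}}\bigr)$ with $B$ the cofactor matrix of $A$ (your $C=dA^{-1}$ up to transpose and sign) and asserting birationality. The verifications you supply (weighted homogeneity of the map, pull-back of the Delsarte equation to the Fermat equation via $AC=dI$, and $\Gamma_A$-invariance via $CA=dI$) are correct and already go beyond what the paper writes, and the one step you leave open --- identifying the generic fibre with a single $\Gamma_A$-orbit --- is precisely the step the paper also omits, deferring to the standard Shioda--Katsura-type lattice computation.
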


\begin{proof} Write $B=(b_{ij})$ for the cofactor matrix of $A$. Then there is a dominant rational map $f\colon F_d \cdots \rightarrow X_A$ defined by
\begin{gather*}
f((y_0:y_1:\cdots :y_n))=\left( \prod_{j=0}^n y_j^{b_{0j}}: \prod_{j=0}^n y_j^{b_{1j}}:\cdots : \prod_{j=0}^n y_j^{b_{nj}} \right) .
\end{gather*}
Hence $X_A$ is birational to $F_d/\Gamma_A$.
\end{proof}

We describe the $\ell$-adic \'etale cohomology of the varieties involved, where $\ell$ is a prime different from $p=\operatorname{char} k$. It is known that the cohomology of Fermat variety $F_d$ is decomposed into one-dimensional pieces parameterized by the characters of~$\Gamma$. In fact, note first that $F_d$ is of dimension $n-1$ and
\begin{gather*}
H^{n-1}_{\rm et}(F_d,\QQ_{\ell})\cong
\begin{cases}
H^{n-1}_{\rm prim}(F_d,\QQ_{\ell}) & \text{if $n$ is even}, \\
V(0) \oplus H^{n-1}_{\rm prim}(F_d,\QQ_{\ell}) & \text{if $n$ is odd},
\end{cases}
\end{gather*}
where $V(0)$ denotes the subspace corresponding to the hyperplane section and $H^{n-1}_{\rm prim}(F_d,\QQ_{\ell})$ is the primitive part of $H^{n-1}(F_d,\QQ_{\ell})$ (cf.~\cite{Ka, Sh}). Define
\begin{gather*}
\mathfrak{A} (F_d)=\left\{ \boldsymbol{\alpha} =(\alpha_0 ,\alpha_1 ,\dots ,\alpha_n )\,\Bigg\vert\, \alpha_i \in \ZZ /d\ZZ,\, \alpha_i \neq 0, \ 0\leq i \leq n,\ \sum_{i=0}^n \alpha_i =0 \right\}
\end{gather*}
and
\begin{gather*}
V(\boldsymbol{\alpha} )=\big\{ v\in H^{n-1}_{\rm prim}(F_d,\ql )\,|\, \bgam^{*} (v)= \gamma_0^{\alpha_0} \gamma_1^{\alpha_1} \cdots \gamma_n^{\alpha_n} \cdot v ,\ \forall\, \bgam =(\gamma_0,\dots ,\gamma_n) \in \Gamma \big\},
\end{gather*}
where $\bgam^{*}$ is the endomorphism of $H^{n-1}_{\rm et}(F_d,\ql )$ induced from $\bgam$. Then the primitive part $H^{n-1}_{\rm prim}(F_d,\ql )$ of the cohomology for $F_d$ is given as
\begin{gather*}
H^{n-1}_{\rm prim}(F_d,\ql )=\bigoplus_{\boldsymbol{\alpha} \in \mathfrak{A} (F_d )} V(\boldsymbol{\alpha} ).
\end{gather*}

A similar property holds for $X_A$ since it is birational to the quotient variety $F_d/\Gamma_A$. Here we describe the cohomology of $F_d/\Gamma_A$.

\begin{Lemma} \label{coh-wdv} Let $X_A$ be a weighted Delsarte variety in $\PP^n(Q)$ with matrix~$A$. Let~$\Gamma_A$ be the group defined in \eqref{ga} and put
\begin{gather*}
\mathfrak{A} (X_A)=\left\{ \boldsymbol{\alpha} =(\alpha_0 ,\alpha_1 ,\dots ,\alpha_n )\in \mathfrak{A} (F_d) \,\Bigg\vert\, \sum_{i=0}^n a_{ij} \alpha_i =0, \ 0\leq j\leq n \right\}.
\end{gather*}
Then
\begin{gather*}
H^{n-1}_{\rm et}(F_d/\Gamma_A ,\ql )\cong
\begin{cases}
\bigoplus_{\boldsymbol{\alpha} \in \mathfrak{A} (X_A )}V(\boldsymbol{\alpha} ) & \text{if $n$ is even}, \\
V(0) \oplus \bigoplus_{\boldsymbol{\alpha} \in \mathfrak{A} (X_A )}V(\boldsymbol{\alpha} ) & \text{if $n$ is odd}.
\end{cases}
\end{gather*}
\end{Lemma}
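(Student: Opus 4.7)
The plan is to identify the cohomology of the quotient with the $\Gamma_A$-invariant subspace of the cohomology of $F_d$, and then compute those invariants character by character using the decomposition already recalled in the excerpt.

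First I would invoke the standard fact that, since the order of $\Gamma_A$ is a divisor of $d^{n+1}$ and hence coprime to $p$, taking $\Gamma_A$-invariants is exact on $\ql$-sheaves and one has
\begin{gather*}
H^{n-1}_{\rm et}(F_d/\Gamma_A,\ql) \cong H^{n-1}_{\rm et}(F_d,\ql)^{\Gamma_A}.
\end{gather*}
Combined with the $\Gamma$-eigenspace decomposition of $H^{n-1}_{\rm prim}(F_d,\ql)$ (and the hyperplane class $V(0)$ when $n$ is odd, which is obviously $\Gamma$-invariant and hence $\Gamma_A$-invariant), the problem reduces to deciding, for each $\boldsymbol{\alpha} \in \mathfrak{A}(F_d)$, whether the one-dimensional eigenspace $V(\boldsymbol{\alpha})$ is fixed by $\Gamma_A$.

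Next I would unwind the $\Gamma_A$-action explicitly. By definition \eqref{ga}, a typical element of $\Gamma_A$ has the form $\bgam = \big(\prod_j \gamma_j^{a_{0j}}, \dots, \prod_j \gamma_j^{a_{nj}}\big)$ for some $(\gamma_0,\dots,\gamma_n) \in \mu_d^{n+1}$. Viewed as an element of $\Gamma$ acting on $V(\boldsymbol{\alpha})$, it multiplies by
\begin{gather*}
\prod_{i=0}^{n}\left(\prod_{j=0}^{n}\gamma_j^{a_{ij}}\right)^{\alpha_i} \;=\; \prod_{j=0}^{n}\gamma_j^{\sum_{i=0}^{n} a_{ij}\alpha_i}.
\end{gather*}
Since the $\gamma_j$ range independently over $\mu_d$, this scalar equals $1$ for every choice of $(\gamma_0,\dots,\gamma_n)$ if and only if $\sum_{i=0}^{n} a_{ij}\alpha_i \equiv 0 \pmod{d}$ for every $0 \leq j \leq n$. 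This is exactly the condition cutting out $\mathfrak{A}(X_A)$ inside $\mathfrak{A}(F_d)$, so the invariant part is
\begin{gather*}
\bigoplus_{\boldsymbol{\alpha}\in \mathfrak{A}(X_A)} V(\boldsymbol{\alpha}),
\end{gather*}
and the two cases in the lemma follow by adding back the $V(0)$ summand when $n$ is odd.

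The only genuinely non-routine point is the first step: one must make sure that $H^{n-1}_{\rm et}(F_d/\Gamma_A,\ql)$ really is the $\Gamma_A$-invariant part of $H^{n-1}_{\rm et}(F_d,\ql)$. This is standard for finite group quotients in \'etale cohomology with coefficients in a field whose characteristic does not divide $|\Gamma_A|$ (via the Leray/Hochschild--Serre spectral sequence for $F_d \to F_d/\Gamma_A$, whose higher group-cohomology terms vanish), and the hypothesis $p\nmid d$ guarantees this. Once this identification is in place, the rest is the character calculation above.
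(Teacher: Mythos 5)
Your proposal is correct and follows exactly the route the paper takes (which it only sketches, deferring details to \cite{Gt2} and \cite{G}): identify $H^{n-1}_{\rm et}(F_d/\Gamma_A,\ql)$ with the $\Gamma_A$-invariants of $H^{n-1}_{\rm et}(F_d,\ql)$, then check eigenspace by eigenspace that $V(\boldsymbol{\alpha})$ is fixed precisely when $\sum_i a_{ij}\alpha_i\equiv 0\pmod d$ for all $j$. Your write-up supplies the character computation and the justification of the invariants isomorphism that the paper leaves implicit, so it is a faithful, more detailed version of the same argument.
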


\begin{proof}The proof is similar to the case of weighted Delsarte (or diagonal) surfaces; see, for instance, \cite{Gt2} and \cite{G} for details. The assertion follows from the isomorphism
\begin{gather*}
H^{n-1}_{\rm et} (F_d/\Gamma_A ,\ql )\cong H^{n-1}_{\rm et}(F_d, \ql )^{\Gamma_A}
\end{gather*}
as $\ql$-vector spaces and by considering the $\Gamma_A$-action on $H^{n-1}_{\rm et}(F_d,\ql )$.
\end{proof}

It is a bit lengthy to describe the set $\mathfrak{A} (X_A )$ etc. here. The reader may refer, for instance, to~\cite{Gt2} or~\cite{Y1} for some concrete examples of $\mathfrak{A} (X_A )$.

\section{Calabi--Yau threefolds of Delsarte type}\label{section4}

In this section, we discuss Calabi--Yau threefolds arising from weighted Delsarte threefolds. Using their cohomology groups, we describe an algorithm for computing the height of their formal groups.

Let $X$ be a weighted projective variety in $\PP^n(Q)$ with $Q= (q_0,\dots ,q_n)$. $X$ is said to be {\it quasi-smooth} (cf.~\cite{Dol}) if its affine quasi-cone is smooth outside the origin. For instance, weighted Fermat varieties are quasi-smooth. As a~special case of weighted quasi-smooth varieties, we observe the following property for quasi-smooth weighted Delsarte varieties.

\begin{Lemma} \label{sing-wd} Let $X_A$ be a quasi-smooth weighted Delsarte variety in $\PP^n(Q)$ of degree $m$ with matrix $A$. Write $d=|\det A|$. Let $\Gamma_A$ be the group defined in $(\ref{ga})$ acting on the
Fermat variety~$F_d$. Then the following assertions hold:
\begin{enumerate}\itemsep=0pt
\item[$(1)$] The quotient variety $F_d/\Gamma_A$ has at most rational abelian quotient singularities.
\item[$(2)$] $X_A$ has at most rational cyclic quotient singularities.
\end{enumerate}
\end{Lemma}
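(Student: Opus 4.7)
The plan is to handle the two assertions separately, since they use different ambient structures: part (1) exploits that $\Gamma_A$ is a finite abelian group of order coprime to $p$ acting on a smooth variety, while part (2) exploits that $\mathbb{P}^n(Q)$ itself has only cyclic quotient singularities along its coordinate strata.

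For part (1), I would first observe that assumption (ii) on $A$ gives $p\nmid d$, since $d=|\det A|$, so the Fermat variety $F_d$ is smooth and the group $\Gamma_A\subset \Gamma=\mu_d^{n+1}/(\text{diagonal})$ is a finite \emph{abelian} group whose order is coprime to the characteristic. The quotient map $F_d\to F_d/\Gamma_A$ is then a tame Galois cover, so at each point of $F_d/\Gamma_A$ the completed local ring is the invariant ring of the completed local ring of $F_d$ (which is regular) under the action of the stabilizer, a finite abelian subgroup of $\Gamma_A$. This exhibits $F_d/\Gamma_A$ as having at most finite abelian quotient singularities. Rationality follows from the classical fact that the quotient of a smooth variety by a finite group of order invertible in $k$ has rational singularities (the Reynolds operator splits the pushforward, and one applies Boutot's argument or the direct Cohen--Macaulayness of invariants of regular rings under linearly reductive group actions).

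For part (2), the argument uses the orbifold structure of quasi-smooth hypersurfaces in weighted projective space rather than Lemma \ref{quot}. The ambient space $\mathbb{P}^n(Q)$ is a toric variety whose singular locus is stratified by the coordinate substrata $\{x_{i_1}=\cdots = x_{i_s}=0\}$; at each point on such a stratum, the local stabilizer is the cyclic group $\mu_e\subset \mathbb{G}_m$ of order $e=\gcd(q_j : j\notin\{i_1,\dots,i_s\})$, so $\mathbb{P}^n(Q)$ has only cyclic quotient singularities, of orders dividing the $q_i$ and hence coprime to $p$ by hypothesis. Since $X_A$ is quasi-smooth, its affine quasi-cone is smooth away from the origin, so the singularities of $X_A$ can only arise from its transverse intersection with the singular strata of $\mathbb{P}^n(Q)$. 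Near such a point the local picture of $X_A$ is the quotient of a smooth hypersurface inside $\mathbb{A}^{n+1}$ by the induced cyclic stabilizer, which is again a cyclic quotient singularity of order coprime to $p$, hence rational.

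The main potential obstacle is justifying the local transversality claim in part (2), i.e.\ that quasi-smoothness of $X_A$ really does force its singularities to be inherited in cyclic form from $\mathbb{P}^n(Q)$, with no additional twisting coming from the polynomial defining $X_A$. I would treat this as a local affine-cone calculation: quasi-smoothness says the Jacobian of the defining polynomial is nonzero away from the origin of the affine cone, and combined with the fact that the $\mathbb{G}_m$-action defining $\mathbb{P}^n(Q)$ has cyclic stabilizers on every orbit, the \'etale-local model of $X_A$ at a singular point is $(\text{smooth})/\mu_e$ with $p\nmid e$. Once this local model is in place, both the cyclic quotient structure and the rationality of the singularity follow immediately, completing the proof.
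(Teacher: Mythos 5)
Your proposal is correct, and its structural half coincides with the paper's: for (1) both arguments note that $F_d$ is smooth (as $p\nmid d=|\det A|$ by condition~(ii)) and that $\Gamma_A$ is finite abelian of order prime to $p$, so $F_d/\Gamma_A$ has at most abelian quotient singularities; for (2) both invoke Dolgachev's local description of a quasi-smooth subvariety of $\PP^n(Q)$ as, locally, a quotient of a smooth variety by a cyclic subgroup of some $\mu_{q_i}$ with $p\nmid q_i$. Where you genuinely diverge is in how \emph{rationality} is established. The paper lifts everything to characteristic $0$ --- the tameness hypotheses (conditions~(i),~(ii) on $A$, and $p\nmid q_i$) make the quotient data liftable, and quotient singularities in characteristic $0$ are classically rational. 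You instead stay in characteristic $p$ and argue that a tame quotient of a smooth variety has rational singularities via the Reynolds-operator splitting. That route works, but be careful with the citation: Boutot's theorem as usually stated is a characteristic-$0$ result, and Cohen--Macaulayness of the invariant ring alone does not give rationality. The correct positive-characteristic chain is that the invariant ring is a direct summand of a regular ring, hence strongly $F$-regular (Hochster--Huneke), hence $F$-rational and therefore pseudo-rational (Smith); alternatively one falls back on exactly the lifting argument the paper uses. So your approach buys independence from liftability at the cost of heavier positive-characteristic commutative algebra, while the paper's argument is shorter but leans explicitly on the tameness conditions to lift to characteristic $0$. Your extra discussion of the toric stratification of $\PP^n(Q)$ in part~(2) is a more detailed justification of the same local model the paper simply cites from Dolgachev, and the transversality point you flag is indeed the content of ``quasi-smooth implies in general position,'' so no gap arises there.
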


\begin{proof} (1) Since $F_d$ is a smooth variety and $\Gamma_A$ is an abelian group, $F_d/\Gamma_A$ has at most abelian quotient singularities. In characteristic~0, quotient singularities are known to be rational. Hence we only need to show that $F_d/\Gamma_A$ is liftable to characteristic 0 and this follows from conditions~(i) and~(ii) on matrix~$A$.

(2) A quasi-smooth variety is locally isomorphic to the quotient of a smooth variety by some cyclic group action and this cyclic group is a~subgroup of $\mu_{q_i}$ for some weight~$q_i$ (cf.~\cite{Dol}). Since~$p\nmid q_i$ for every~$i$, the group action by a subgroup of $\mu_{q_i}$ can be lifted to characteristic 0. Hence~$X_A$ has at most cyclic quotient singularities and they are rational.
\end{proof}

Now we consider weighted Delsarte threefolds.

\begin{Lemma} \label{cy} Let $X_A$ be a weighted Delsarte threefold in $\PP^4(Q)$ with matrix $A$. Assume that~$X_A$ is quasi-smooth and $m=q_0+q_1+q_2+q_3+q_4$. Then the dualizing sheaf of $X_A$ is trivial and there exists a crepant resolution for $X_A$.
\end{Lemma}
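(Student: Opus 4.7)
The plan is to split the lemma into the two assertions and treat them separately.

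First, for triviality of $\omega_{X_A}$, I would invoke the adjunction formula for quasi-smooth hypersurfaces in weighted projective space (cf.~\cite{Dol}): if $X_A$ is a quasi-smooth hypersurface of degree $m$ in $\PP^4(Q)$ with $|Q|=q_0+q_1+q_2+q_3+q_4$, then $\omega_{X_A}\cong \co_{X_A}(m-|Q|)$ in the sense of Grothendieck duality. The numerical hypothesis $m=q_0+q_1+q_2+q_3+q_4$ makes this twist trivial, so $\omega_{X_A}\cong\co_{X_A}$. The only care needed is that adjunction is applied on a possibly singular but quasi-smooth variety, which is standard material in the theory of weighted hypersurfaces.

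Second, for existence of a crepant resolution, I would combine the triviality of $\omega_{X_A}$ with Lemma~\ref{sing-wd}(2). By that lemma, $X_A$ has at most rational cyclic quotient singularities, and the Gorenstein property forced by $\omega_{X_A}\cong\co_{X_A}$ pins down each local model to the form $\mathbb{A}^3/\mu_r$ with $\mu_r$ acting through $SL_3$. Every such three-dimensional Gorenstein (abelian) cyclic quotient singularity is toric, and by Reid's junior-simplex construction admits a crepant resolution; these local resolutions are canonical enough along the toric strata to glue into a global crepant resolution $\xatil\to X_A$. The passage to positive characteristic is harmless because the cyclic group orders involved divide the weights $q_i$ and $p\nmid q_i$, so the toric combinatorics goes through unchanged (or, equivalently, one lifts to characteristic~$0$, resolves, and specializes, as in the proof of Lemma~\ref{sing-wd}).

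The main obstacle will be to verify that every pointwise isotropy group appearing in the quasi-smooth local description of $X_A$ really lies in $SL_3$; this is precisely where the Calabi--Yau balancing condition $\sum q_i=m$ passes from the global sheaf calculation into the local singularity theory. Once that point is cleared, adjunction together with Reid's junior-simplex crepant resolutions immediately deliver the conclusion.
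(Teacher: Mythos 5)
Your first assertion follows the same route as the paper: the adjunction formula $\omega_{X_A}\cong\co_{X_A}(m-|Q|)$ for quasi-smooth weighted hypersurfaces, with the balancing condition $m=\sum q_i$ killing the twist. The one point you wave at as ``standard material'' is exactly what the paper pins down: the formula requires $X_A$ to be in general position relative to $\PP^4(Q)_{\rm sing}$ (i.e., $\operatorname{codim}_{X_A}\bigl(X_A\cap \PP^4(Q)_{\rm sing}\bigr)\geq 2$), and this is supplied by Dimca's result that a quasi-smooth hypersurface threefold automatically satisfies it. You should cite that, since for non-well-formed weighted hypersurfaces the adjunction formula can fail; but this is a citation gap, not a conceptual one.

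For the second assertion the paper simply cites Greene--Roan--Yau, whereas you attempt to reprove it: Gorenstein cyclic quotient singularities in dimension $3$, Reid's junior-simplex criterion, local toric crepant resolutions, then glue. The strategy is the right one (it is essentially what the cited reference does), but the sentence ``these local resolutions are canonical enough along the toric strata to glue'' is a genuine gap. Unlike the surface case, a $3$-dimensional Gorenstein abelian quotient singularity generally has \emph{many} crepant resolutions, related by flops, so there is no canonical local choice and gluing is not automatic. Moreover the singular locus of $X_A$ is typically not isolated points but includes curves of transverse $A_n$-singularities meeting at dissident points, so one must produce compatible resolutions along these strata. The standard fix is a global construction -- a simultaneous toric (partial) crepant resolution of $\PP^4(Q)$ restricting to $X_A$, or the MPCP machinery -- rather than local-to-global patching. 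Your identification of the isotropy representations landing in $SL_3$ via triviality of $\omega_{X_A}$ is correct, and your remark that tameness ($p\nmid q_i$) makes the toric combinatorics characteristic-free is also fine; the missing ingredient is precisely the global compatibility of the crepant resolutions, which is the content of the theorem the paper quotes.
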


\begin{proof}Since $X_A$ is a quasi-smooth hypersurface and of dimension~3, it is known (cf.~\cite[Proposition~6]{Dim}) to be in general position relative to $\PP^4(Q)_{\rm sing}$ (i.e., $\operatorname{codim}_{X_A} \big(X_A\cap \PP^4(Q)_{\rm sing}\big)\geq 2$, where $\PP^4(Q)_{\rm sing}$ is the singular locus of $\PP^4(Q)$). Hence the dualizing sheaf of~$X_A$ is computed as~$\omega_{X_A} \cong \co_{X_A} (m-q_0-q_1-q_2-q_3-q_4)\cong \co_{X_A}$ (see~\cite{Dol}). The existence of a crepant resolution for $X_A$ is proved in~\cite{GRY}.
\end{proof}

\begin{Definition}If $X_A$ is a quasi-smooth weighted Delsarte threefold with matrix~$A$ of degree~$m$ with $m=q_0+ \cdots +q_4$, then a crepant resolution $\xtil_A$ of $X_A$ is called a {\it Calabi--Yau threefold of $($weighted$)$ Delsarte type in~$\PP^4(Q)$ with matrix~$A$}. When $A$ is a diagonal matrix, $X_A$ is also called a {\it Calabi--Yau threefold of $($weighted$)$ Fermat type}.
\end{Definition}

Since the quotient $F_d/\Gamma_A$ is birational to $X_A$, it is also birational to $\xtil_A$. Using the cohomological information of $F_d/\Gamma_A$, we write several birational properties of $\xtil_A$. Recall that $A=(a_{ij})$, $d=|\det A|$ and that $F_d$ is the Fermat threefold of degree $d$ in $\PP^4$. We have
\begin{gather*}
H^3_{\rm et}(F_d/\Gamma_A ,\ql )\cong \bigoplus_{\boldsymbol{\alpha} \in \mathfrak{A} (X_A )} V(\boldsymbol{\alpha} ),
\end{gather*}
where
\begin{gather*}
\mathfrak{A} (X_A)=\left\{ (\alpha_0 ,\dots ,\alpha_4 )\in (\ZZ /d\ZZ )^5
\left|
\begin{matrix}
\alpha_i \neq 0, \ 0\leq i\leq 4,\
\sum\limits_{i=0}^4\alpha_i =0 \\
\sum\limits_{i=0}^4 a_{ij} \alpha_i =0 \ \text{for } 0\leq j\leq 4
\end{matrix}
\right. \right\}.
\end{gather*}
For each $\boldsymbol{\alpha} =(\alpha_0 ,\dots ,\alpha_4 )\in \mathfrak{A} (X_A)$,
define an integer
\begin{gather*}
\| \boldsymbol{\alpha} \| =\sum_{i=0}^{4} \left\langle\frac{\alpha_i}{d} \right\rangle -1,
\end{gather*}
where $<\alpha_i /d>$ denotes the fractional part of $\alpha_i /d$. It takes values $\| \boldsymbol{\alpha} \| =0,1,2$ or $3$. If $\xtil_A$ is a Calabi--Yau threefold of Delsarte type in $\PP^4(Q)$ with matrix $A$, then there exists a unique element
\begin{gather*}
\boldsymbol{\alpha}_0=(\alpha_0 , \alpha_1 ,\dots ,\alpha_4 )\in \mathfrak{A} (X_A)
\end{gather*}
with $\| \boldsymbol{\alpha}_0 \| =0$ (cf.~\cite{SY,Y1}). Note that $\| -\boldsymbol{\alpha}_0 \| =(4-1)-\| \boldsymbol{\alpha}_0 \|=3-0=3$.

Recall that $p\ (=\operatorname{char} k)$ is relatively prime to $d$. Let $f$ be the order of~$p$ modulo~$d$. Put
\begin{gather*}
H= \big\{p^i\ (\operatorname{mod} d)\,|\, 0\leq i< f \big\},
\end{gather*}
which is a subgroup of $(\ZZ /d\ZZ )^{\times}$. For $\boldsymbol{\alpha} =(\alpha_0 , \dots ,\alpha_4 )\in \mathfrak{A} (X_A)$, we define a non-negative integer
\begin{gather*}
\mathcal{A}_H(\boldsymbol{\alpha} )=\sum_{t\in H} \| t\boldsymbol{\alpha} \|.
\end{gather*}
It is known that the part (line segments) of the Newton polygon of $\Phi_{\xtil_A}$ with slope less than $1$ corresponds to the height of the formal group of~$\xtil_A$; if there is no such part, then $\h \Phi_{\xtil_A} =\infty$ (cf.~\cite{AM,GK}). In our situation, $X_A$, $\xtil_A$ and $F_d/\Gamma_A$ are birational to each other (by resolving rational singularities) and the formal groups are invariant under resolution of rational singularities; see Lemma~\ref{mirror1}. Hence the height of~$\Phi_{\xtil_A}$ can be computed from the Newton polygon of~$F_d/\Gamma_A$. The slopes of this polygon is given by $\mathcal{A}_H(\boldsymbol{\alpha} )/f$ and the length of the part with slope less than~$1$ is equal to the number of~$\boldsymbol{\alpha} \in \mathfrak{A} (X_A)$ satisfying $\mathcal{A}_H(\boldsymbol{\alpha} )/f <1$. In summary, the height of~$\Phi_{\xtil_A}$ can be calculated by evaluating $\mathcal{A}_H(\boldsymbol{\alpha} )$ for~$\boldsymbol{\alpha} \in \mathfrak{A} (X_A)$.

\begin{Lemma} \label{lambda-0} Let $\xtil_A$ be a Calabi--Yau threefold of Delsarte type in $\PP^4(Q)$ with matrix~$A$. If there exists $\boldsymbol{\alpha}'$ with $\mathcal{A}_H(\boldsymbol{\alpha}' )<f$, then $\mathcal{A}_H(\boldsymbol{\alpha}' )=\mathcal{A}_H(\boldsymbol{\alpha}_0 )$; furthermore, the height of $\Phi_{\xtil_A}$ is finite and equal to the length $($cardinality$)$ of the $H$-orbit of $\boldsymbol{\alpha}_0$.
\end{Lemma}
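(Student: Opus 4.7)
The plan is to exploit two facts already available: that $\mathcal{A}_H$ is constant on $H$-orbits in $\mathfrak{A}(X_A)$, and that $\boldsymbol{\alpha}_0$ is the \emph{unique} element of $\mathfrak{A}(X_A)$ with $\| \boldsymbol{\alpha}_0 \| = 0$.

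First I would check that $H$ acts on $\mathfrak{A}(X_A)$: for $t \in H \subset (\ZZ/d\ZZ)^{\times}$, multiplication by $t$ preserves the non-vanishing of coordinates modulo $d$ and the two homogeneous linear relations $\sum \alpha_i = 0$ and $\sum a_{ij}\alpha_i = 0$. Then, for any fixed $t_0 \in H$, the reindexing $s = tt_0$ (a bijection of $H$) gives
\begin{gather*}
\mathcal{A}_H(t_0 \boldsymbol{\alpha}) = \sum_{t \in H} \| (tt_0)\boldsymbol{\alpha} \| = \sum_{s \in H} \| s \boldsymbol{\alpha} \| = \mathcal{A}_H(\boldsymbol{\alpha}),
\end{gather*}
so $\mathcal{A}_H$ is constant on $H$-orbits.

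Next, suppose $\mathcal{A}_H(\boldsymbol{\alpha}') < f$. The quantity $\mathcal{A}_H(\boldsymbol{\alpha}')$ is a sum of $f = |H|$ non-negative integer summands $\| t \boldsymbol{\alpha}' \|$, so by pigeonhole at least one summand must vanish: there is $t_0 \in H$ with $\| t_0 \boldsymbol{\alpha}' \| = 0$. By the uniqueness recalled above, $t_0 \boldsymbol{\alpha}' = \boldsymbol{\alpha}_0$, which puts $\boldsymbol{\alpha}'$ in the $H$-orbit of $\boldsymbol{\alpha}_0$. Combined with orbit invariance this yields $\mathcal{A}_H(\boldsymbol{\alpha}') = \mathcal{A}_H(\boldsymbol{\alpha}_0)$, establishing the first assertion.

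Finally, the hypothesis produces a Newton-polygon segment of slope $\mathcal{A}_H(\boldsymbol{\alpha}')/f < 1$, so $\h \Phi_{\xtil_A}$ is finite. By the description of the Newton polygon given just above the lemma, the height equals $\# \{ \boldsymbol{\alpha} \in \mathfrak{A}(X_A) : \mathcal{A}_H(\boldsymbol{\alpha}) < f \}$; the previous paragraph identifies this set with the $H$-orbit of $\boldsymbol{\alpha}_0$, so $\h \Phi_{\xtil_A} = | H \cdot \boldsymbol{\alpha}_0 |$. The real content of the argument is the pigeonhole step used in conjunction with the uniqueness of $\boldsymbol{\alpha}_0$; the rest is bookkeeping, so I do not foresee a serious obstacle.
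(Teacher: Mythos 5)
Your proposal is correct and follows essentially the same route as the paper: the pigeonhole step forcing some $\| t_0\boldsymbol{\alpha}'\| =0$, the uniqueness of $\boldsymbol{\alpha}_0$, and the $H$-orbit invariance of $\mathcal{A}_H$ are exactly the ingredients of the paper's argument. The only difference is that you spell out the reindexing $s=tt_0$ and the Newton-polygon bookkeeping slightly more explicitly, which the paper leaves implicit.
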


\begin{proof}Suppose that $\| t\boldsymbol{\alpha}' \| \geq 1$ for all $t\in H$. Then as $\# H=f$, we have $\mathcal{A}_H(\boldsymbol{\alpha}' )\geq f$; but, this is against the assumption. Hence $\| t\boldsymbol{\alpha}' \| =0$ for some $t\in H$ and by the uniqueness of $\boldsymbol{\alpha}_0$, we find $t\boldsymbol{\alpha}'=\boldsymbol{\alpha}_0$. Since~$H$ is a subgroup of~$(\ZZ /d\ZZ )^{\times}$, we find $\mathcal{A}_H(\boldsymbol{\alpha}' )
=\mathcal{A}_H(\boldsymbol{\alpha}_0 )$ and this holds for every element in the $H$-orbit of $\boldsymbol{\alpha}_0$. Therefore the number of $\boldsymbol{\alpha}$'s with $\mathcal{A}_H(\boldsymbol{\alpha} )<f$ is equal to the length of the $H$-orbit of $\boldsymbol{\alpha}_0$ and so is the height of~$\Phi_{\xtil_A}$.
\end{proof}

We are going to calculate the length of the $H$-orbit of $\boldsymbol{\alpha}_0$, namely the number of distinct elements in $\{ t\boldsymbol{\alpha}_0 \,|\, t\in H \}$. Given $\boldsymbol{\alpha}_0=(\alpha_0 , \alpha_1 ,\dots ,\alpha_4 )$ with $\| \boldsymbol{\alpha}_0 \| =0$, we may choose every $\alpha_i$ as $0<\alpha_i <d$ so that $\alpha_0+\alpha_1+\cdots +\alpha_4=d$; then we set
\begin{gather} \label{d}
e=\gcd (\alpha_0 , \alpha_1 ,\dots ,\alpha_4, d ) \qquad \text{and}\qquad d_A =\frac{d}{e}.
\end{gather}
We immediately see $e<d$ and $d_A\geq 2$. Further, suppose $d_A=2$. Then it follows from $0<\alpha_i <d$ that $\alpha_i =e$ for all $i$, which implies $5e=d=2e$. Since this is absurd, we find $d_A\geq 3$.

\begin{Lemma} \label{slope} Let $\xtil_A$ be a Calabi--Yau threefold of Delsarte type in $\PP^4(Q)$ with matrix $A$. Let $d_A$ be the integer defined in~\eqref{d}. Write $f$ $($resp.~$f_A)$ for the order of $p$ modulo $d$ $($resp.\ modulo~$d_A)$. Then the following assertions hold.
\begin{enumerate}\itemsep=0pt
\item[$(1)$] $\mathcal{A}_H(\boldsymbol{\alpha}_0 )=\frac{f}{f_A} \sum\limits_{i=0}^{f_A -1} \| p^i \boldsymbol{\alpha}_0 \|$.
\item[$(2)$] $\mathcal{A}_H(\boldsymbol{\alpha}_0 )<f$ if and only if $\| p^i \boldsymbol{\alpha}_0 \| \leq 1$ for $0\leq i<f_A$.
\end{enumerate}
\end{Lemma}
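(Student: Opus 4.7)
The plan is to reduce both claims to arithmetic modulo $d_A$, exploiting the common factor $e$ in the entries of $\boldsymbol{\alpha}_0$. Writing $\alpha_j=e\alpha'_j$ with $\gcd(\alpha'_0,\ldots,\alpha'_4,d_A)=1$, we get $\langle t\alpha_j/d\rangle=\langle t\alpha'_j/d_A\rangle$ for every $t$ coprime to $d$, so $\|t\boldsymbol{\alpha}_0\|$ depends only on the residue class of $t$ modulo $d_A$. In particular, $t\equiv 1\pmod{d_A}$ forces $\|t\boldsymbol{\alpha}_0\|=\|\boldsymbol{\alpha}_0\|=0$; conversely, if $t\boldsymbol{\alpha}_0=\boldsymbol{\alpha}_0$ in $(\ZZ/d\ZZ)^5$ then $(t-1)\alpha'_j\equiv 0\pmod{d_A}$ for each $j$, and the coprimality of $(\alpha'_0,\ldots,\alpha'_4,d_A)$ with $d_A$ yields $t\equiv 1\pmod{d_A}$.

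For part~(1), the inclusion $d_A\mid d$ gives a group homomorphism $H=\langle p\rangle\subset(\ZZ/d\ZZ)^\times \to (\ZZ/d_A\ZZ)^\times$ whose image is $\{p^i\bmod d_A\mid 0\leq i<f_A\}$; since $|H|=f$ and the image has size $f_A$, every residue in the image is attained exactly $f/f_A$ times as $t$ ranges over $H$. Combining this with the fact that $\|t\boldsymbol{\alpha}_0\|$ factors through reduction mod $d_A$ gives
\[
\mathcal{A}_H(\boldsymbol{\alpha}_0)=\sum_{t\in H}\|t\boldsymbol{\alpha}_0\|=\frac{f}{f_A}\sum_{i=0}^{f_A-1}\|p^i\boldsymbol{\alpha}_0\|,
\]
which is assertion~(1).

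For part~(2), the backward direction is immediate from~(1): if $\|p^i\boldsymbol{\alpha}_0\|\leq 1$ for every $0\leq i<f_A$, then using $\|\boldsymbol{\alpha}_0\|=0$, the inner sum is at most $f_A-1$, hence $\mathcal{A}_H(\boldsymbol{\alpha}_0)\leq(f/f_A)(f_A-1)<f$. For the forward direction, note that for $1\leq i<f_A$ one has $p^i\not\equiv 1\pmod{d_A}$, so by the first paragraph $p^i\boldsymbol{\alpha}_0\neq\boldsymbol{\alpha}_0$ inside $\mathfrak{A}(X_A)$, and by the uniqueness of the element with $\|\cdot\|=0$ recalled before the lemma we must have $\|p^i\boldsymbol{\alpha}_0\|\geq 1$. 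If some $\|p^{i_0}\boldsymbol{\alpha}_0\|\geq 2$ with $1\leq i_0<f_A$, then
\[
\sum_{i=0}^{f_A-1}\|p^i\boldsymbol{\alpha}_0\|\;\geq\;0+2+(f_A-2)\;=\;f_A,
\]
so by~(1) $\mathcal{A}_H(\boldsymbol{\alpha}_0)\geq f$, contradicting the hypothesis. The only real subtlety is the clean identification of $d_A$ as the true period of $\boldsymbol{\alpha}_0$ under the $(\ZZ/d\ZZ)^\times$-action and the invocation of uniqueness of $\boldsymbol{\alpha}_0$ to rule out $\|p^i\boldsymbol{\alpha}_0\|=0$ for $1\leq i<f_A$; once these are in hand both statements reduce to elementary counting.
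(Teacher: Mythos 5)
Your proof is correct and follows essentially the same route as the paper: for part (1) you identify the stabilizer of $\boldsymbol{\alpha}_0$ in $H$ with the kernel of reduction modulo $d_A$, and for part (2) you use the uniqueness of the element with $\|\cdot\|=0$ to force $\|p^i\boldsymbol{\alpha}_0\|\geq 1$ for $1\leq i<f_A$ and then count. The only cosmetic difference is that the paper organizes part (2) by a case split on whether $-1$ lies in the orbit of $p$ modulo $d_A$ (so that a term equal to $3$ appears), whereas you treat any term $\geq 2$ uniformly; both come down to the same bookkeeping.
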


\begin{proof} (1) Since $e=\gcd (\gcd (\alpha_0 , \alpha_1 ,\dots ,\alpha_4), d )$, write $\gcd (\alpha_0 , \alpha_1 ,\dots ,\alpha_4)=eg$ for some $g$ with $\gcd (g,d_A)=1$. Then
\begin{align*}
p^i \boldsymbol{\alpha}_0 =\boldsymbol{\alpha}_0 & \Leftrightarrow p^i \alpha_j \equiv \alpha_j \pmod{d} \\
 & \Leftrightarrow d\,|\, \big(p^i-1\big)\gcd (\alpha_0 , \alpha_1 ,\dots ,\alpha_4) \\
 & \Leftrightarrow d_A \,|\, \big(p^i-1\big)g \\
 & \Leftrightarrow p^i\equiv 1\pmod{d_A}.
\end{align*}
Hence
\begin{gather*}
\mathcal{A}_H(\boldsymbol{\alpha}_0 )=\frac{f}{f_A} \big( \| \boldsymbol{\alpha}_0 \| +
\| p \boldsymbol{\alpha}_0 \| +\cdots +\big\| p^{f_A -1} \boldsymbol{\alpha}_0 \big\| \big) =
\frac{f}{f_A} \sum_{i=0}^{f_A -1} \big\| p^i \boldsymbol{\alpha}_0 \big\|.
\end{gather*}

(2) Let $H'= \{ p^i\ (\operatorname{mod} d)\,|\, 0\leq i< f_A \}$ and write $a$ (resp.~$b$) for the multiplicity of $1$ (resp.~$2$) among the $\| t\boldsymbol{\alpha}_0 \|$'s for $t\in H'$. Depending on whether $H'\ni -1$ or not, we divide the proof into two cases:

(i) If $H' \ni -1$, then $a+b+2=f_A$ and we find
\begin{gather*}
\sum_{i=0}^{f_A -1} \big\| p^i \boldsymbol{\alpha}_0 \big\| =a+2b+3 =f_A+b+1>f_A.
\end{gather*}
Hence by (1), $\mathcal{A}_H(\boldsymbol{\alpha}_0 ) >f$.

(ii) If $H \not\ni -1$, then $a+b+1=f_A$ and one sees{\samepage
\begin{gather*}
\frac{f}{f_A} \sum_{i=0}^{f_A -1} \big\| p^i \boldsymbol{\alpha}_0 \big\| = \frac{f}{f_A} (0+1+\cdots +1+2+\cdots +2) =\frac{f(f_A+b-1)}{f_A}.
\end{gather*}
Hence $\mathcal{A}_H(\boldsymbol{\alpha}_0 )<f$ is equivalent to $b-1<0$ (i.e., $b=0$).}

Therefore the inequality $\mathcal{A}_H(\boldsymbol{\alpha}_0 )<f$ holds if and only if the case (ii) occurs with $b=0$; this is the case where $\| \boldsymbol{\alpha}_0 \|=0$ and $\| p^i \boldsymbol{\alpha}_0 \|=1$ for all $i$ with $1\leq i<f_A$.
\end{proof}

\begin{Theorem} \label{height-f}Let $\xtil_A$ be a Calabi--Yau threefold of Delsarte type in $\PP^4(Q)$ with matrix~$A$. Let~$d_A$ be the integer defined in~\eqref{d} and~$f_A$ be the order of~$p$ modulo~$d_A$. Write $h:=\h \Phi_{\xtil_A}$. Then the following assertions hold.
\begin{enumerate}\itemsep=0pt
\item[$(1)$] $h$ is finite if and only if $\| p^i \boldsymbol{\alpha}_0 \| \leq 1$ for $0\leq i<f_A$.
\item[$(2)$] If $h$ is finite, then $h=f_A$.
\end{enumerate}
\end{Theorem}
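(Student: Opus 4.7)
The plan is that both parts of the theorem fall out of Lemmas~\ref{lambda-0} and~\ref{slope} essentially for free, via a standard orbit--stabilizer count; once those lemmas are in hand, the argument is largely bookkeeping. The setup in the paragraphs preceding Lemma~\ref{lambda-0} identifies $\h \Phi_{\xtil_A}$ with the number of $\boldsymbol{\alpha} \in \mathfrak{A}(X_A)$ satisfying $\mathcal{A}_H(\boldsymbol{\alpha})/f < 1$, so everything reduces to analyzing the single element $\boldsymbol{\alpha}_0$ and its $H$-orbit.

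For part~(1) I would argue as follows. The height $h$ is finite if and only if there exists some $\boldsymbol{\alpha}' \in \mathfrak{A}(X_A)$ with $\mathcal{A}_H(\boldsymbol{\alpha}') < f$. By Lemma~\ref{lambda-0}, any such $\boldsymbol{\alpha}'$ satisfies $\mathcal{A}_H(\boldsymbol{\alpha}') = \mathcal{A}_H(\boldsymbol{\alpha}_0)$, so the existence of such a~$\boldsymbol{\alpha}'$ is equivalent to $\mathcal{A}_H(\boldsymbol{\alpha}_0) < f$. By Lemma~\ref{slope}(2), this last inequality is in turn equivalent to $\| p^i \boldsymbol{\alpha}_0\| \le 1$ for $0 \le i < f_A$, which is exactly~(1).

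For part~(2), assume $h$ is finite. Lemma~\ref{lambda-0} gives $h = \#\{t \boldsymbol{\alpha}_0 : t \in H\}$, so by orbit--stabilizer it suffices to identify the stabilizer of $\boldsymbol{\alpha}_0$ in $H$. The chain of equivalences established inside the proof of Lemma~\ref{slope}(1) already does this:
\begin{gather*}
p^i \boldsymbol{\alpha}_0 = \boldsymbol{\alpha}_0 \ \Leftrightarrow \ p^i \equiv 1 \pmod{d_A} \ \Leftrightarrow \ f_A \mid i.
\end{gather*}
Hence among the exponents $0 \le i < f$ there are exactly $f/f_A$ stabilizing ones, so the orbit has length $f/(f/f_A) = f_A$, whence $h = f_A$.

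There is essentially no real obstacle; the genuine work was carried out in Lemmas~\ref{lambda-0} and~\ref{slope}. The one small point I would verify cleanly when writing it up is that the exponents $0 \le i < f$ parametrize $H$ bijectively, so that counting stabilizing exponents agrees with counting elements of the stabilizing subgroup -- but this is immediate from the definition of $f$ as the multiplicative order of $p$ modulo $d$, together with the fact that $d_A \mid d$ forces $f_A \mid f$.
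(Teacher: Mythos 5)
Your proposal is correct and follows essentially the same route as the paper: part (1) via Lemma~\ref{lambda-0} together with Lemma~\ref{slope}(2), and part (2) via Lemma~\ref{lambda-0} plus the stabilizer computation $p^i\boldsymbol{\alpha}_0=\boldsymbol{\alpha}_0 \Leftrightarrow p^i\equiv 1\pmod{d_A}$ extracted from the proof of Lemma~\ref{slope}(1). The paper merely phrases the finiteness criterion through $\dim_K\big(H^3_{\rm cris}\big(\xtil_A\big)\otimes K_{[0,1[}\big)\geq 1$ before reducing to the same combinatorial condition, so there is no substantive difference.
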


\begin{proof} (1) Recall first that the height of $\Phi_{\xtil_A}$ can be computed from the Newton polygon of $F_d/\Gamma_A$ and its slope-less-than-$1$ part has length equal to the number of $\boldsymbol{\alpha} \in \mathfrak{A} (X_A)$ satisfying $\mathcal{A}_H(\boldsymbol{\alpha} )<f$. Write $K$ for the quotient field of the ring $W(k)$ of Witt vectors over $k$. Then by~\cite{AM},
\begin{align*}
h <\infty & \Leftrightarrow \dim_K \big(H^3_{\rm cris} \big(\xtil_A \big) \otimes K_{[0,1[}\big) \geq 1 \\
 & \Leftrightarrow \# \{\boldsymbol{\alpha} \in \mathfrak{A} (X_A)\, |\, \mathcal{A}_H(\boldsymbol{\alpha} )<f\} \geq 1 \\
 & \Leftrightarrow \mathcal{A}_H(\boldsymbol{\alpha}_0 )<f \\
 & \Leftrightarrow \big\| p^i \boldsymbol{\alpha}_0 \big\| \leq 1 \quad \text{for all $i$ with $0\leq i<f_A$.}
\end{align*}
The last equivalence follows from Lemma~\ref{slope}.

(2) Let $H$ be the orbit of $p$ in $(\ZZ /d\ZZ )^{\times}$. From the proof of Lemma~\ref{slope}, one sees that the length of the $H$-orbit of $\boldsymbol{\alpha}_0$ is equal to $f_A$. Hence together with Lemma \ref{lambda-0}, we have the following:
\begin{align*}
h & = \dim_K \big(H^3_{\rm cris} \big(\xtil_A \big)\otimes K_{[0,1[}\big) = \# \{\boldsymbol{\alpha} \in \mathfrak{A} (X_A)\, |\, \mathcal{A}_H(\boldsymbol{\alpha} )<f\} \\
 & = \text{the length of the $H$-orbit of $\boldsymbol{\alpha}_0$} =f_A.\tag*{\qed}
\end{align*}\renewcommand{\qed}{}
\end{proof}

For actual calculations, it is simpler to work modulo $d_A$ rather than modulo $d$. We thus reformulate Lemma~\ref{slope} and Theorem~\ref{height-f} in terms of modulo $d_A$.

\begin{Corollary} \label{alpha_A}With the assumptions as in Theorem {\rm \ref{height-f}}, define
\begin{gather*}
\boldsymbol{\alpha}_A=\frac{1}{e} \boldsymbol{\alpha}_0 =\left( \frac{\alpha_0}{e} ,\dots ,\frac{\alpha_4}{e} \right).
\end{gather*}
Let $H_A=\{ p^i \pmod{d_A} \,|\, 0\leq i<f_A \}$, and for $\boldsymbol{\beta} =(\beta_0,\dots ,\beta_4 )\in (\ZZ /d_A \ZZ )^5$, write
\begin{gather*}
\| \boldsymbol{\beta} \|_{d_A} = \sum_{i=0}^{4} \left\langle \frac{\beta_i}{d_A} \right\rangle -1, \qquad \mathcal{A}_{H_A}(\boldsymbol{\beta} )=\sum_{t\in H_A} \| t\boldsymbol{\beta} \|_{d_A}.
\end{gather*}
Then we may regard $\boldsymbol{\alpha}_A$ as an element of $(\ZZ /d_A \ZZ )^5$ and have the following.
\begin{enumerate}\itemsep=0pt
\item[$(1)$] $\| p^i \boldsymbol{\alpha}_0 \| =\| p^i \boldsymbol{\alpha}_A \|_{d_A}$.

\item[$(2)$] $\mathcal{A}_{H}(\boldsymbol{\alpha}_0 ) =\frac{f}{f_A} \mathcal{A}_{H_A}(\boldsymbol{\alpha}_A )$.

\item[$(3)$] $h$ is finite if and only if $\| p^i \boldsymbol{\alpha}_A \|_{d_A} \leq 1$ for $0\leq i<f_A$.
\end{enumerate}
\end{Corollary}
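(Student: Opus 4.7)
The proof is essentially a bookkeeping translation between working modulo $d$ and modulo $d_A=d/e$, using that $e$ divides every component of $\boldsymbol{\alpha}_0$. I would organize it as follows.

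First I would establish part~(1), which is the only part that needs a genuine argument; (2) and (3) then follow formally. Fix $i$ and $0\leq j\leq 4$, and let $r_j$ be the reduction of $p^i\alpha_j$ modulo $d$, so $0\leq r_j<d$. Because $e\mid \alpha_j$ and $e\mid d$, the residue $r_j$ is also divisible by~$e$, so $r_j/e$ is an integer satisfying $0\leq r_j/e<d_A$ and $r_j/e\equiv p^i(\alpha_j/e)\pmod{d_A}$. Consequently
\begin{gather*}
\left\langle \frac{p^i\alpha_j}{d}\right\rangle = \frac{r_j}{d} = \frac{r_j/e}{d_A} = \left\langle \frac{p^i(\alpha_j/e)}{d_A}\right\rangle .
\end{gather*}
Summing over $j$ and subtracting $1$ yields $\|p^i\boldsymbol{\alpha}_0\|=\|p^i\boldsymbol{\alpha}_A\|_{d_A}$, which is (1).

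For part~(2), I would reuse the observation in the proof of Lemma~\ref{slope}(1) that $p^i\boldsymbol{\alpha}_0=\boldsymbol{\alpha}_0$ in $(\ZZ/d\ZZ)^5$ if and only if $p^i\equiv 1\pmod{d_A}$, i.e.\ if and only if $f_A\mid i$. Hence the $H$-orbit of $\boldsymbol{\alpha}_0$ in $(\ZZ/d\ZZ)^5$ has length $f_A$ and the values $\|p^i\boldsymbol{\alpha}_0\|$ for $0\leq i<f$ repeat with period $f_A$, each block of $f_A$ consecutive values contributing the same total. Therefore
\begin{gather*}
\mathcal{A}_H(\boldsymbol{\alpha}_0)=\sum_{i=0}^{f-1}\|p^i\boldsymbol{\alpha}_0\|=\frac{f}{f_A}\sum_{i=0}^{f_A-1}\|p^i\boldsymbol{\alpha}_0\|,
\end{gather*}
and by (1) the inner sum equals $\sum_{i=0}^{f_A-1}\|p^i\boldsymbol{\alpha}_A\|_{d_A}=\mathcal{A}_{H_A}(\boldsymbol{\alpha}_A)$, since $H_A$ is exactly the orbit of $p$ in $(\ZZ/d_A\ZZ)^\times$.

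Part~(3) is then a direct rewriting of Theorem~\ref{height-f}(1): that theorem says $h$ is finite if and only if $\|p^i\boldsymbol{\alpha}_0\|\leq 1$ for all $0\leq i<f_A$, and applying~(1) replaces this by the condition $\|p^i\boldsymbol{\alpha}_A\|_{d_A}\leq 1$. The only subtle point in the whole argument is the verification in~(1) that the $\mathbb{Z}$-valued residue $r_j$ is automatically divisible by~$e$; once this divisibility is in hand the two denominators $d$ and $d_A$ cancel against the scaling by $e$ and everything else is formal. I do not expect any real obstacle beyond being careful with that residue computation.
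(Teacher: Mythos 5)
Your proof is correct and follows essentially the same route as the paper: part (1) is the observation that $p^i\alpha_j/d$ and $p^i\beta_j/d_A$ are the same rational number (the paper cancels the factor $e$ directly, while you verify the equivalent fact that the residue $r_j$ is divisible by $e$), part (2) is Lemma~\ref{slope}(1) combined with (1), and part (3) is Theorem~\ref{height-f}(1) combined with (1). No gaps.
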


\begin{proof} (1) We write $\boldsymbol{\alpha}_A=(\beta_0,\dots, \beta_4 )$. Then
\begin{gather*}
\big\| p^i \boldsymbol{\alpha}_0 \big\| = \sum_{j=0}^{4} \left\langle \frac{p^i\alpha_j}{d} \right\rangle -1 =\sum_{j=0}^{4} \left\langle \frac{p^ie\beta_j}{ed_A} \right\rangle -1 =
\sum_{j=0}^{4} \left\langle \frac{p^i\beta_j}{d_A} \right\rangle -1 =\big\| p^i \boldsymbol{\alpha}_A \big\|_{d_A}.
\end{gather*}

(2) This follows from Lemma~\ref{slope}.

(3) This follows from (1) and Theorem~\ref{height-f}.
\end{proof}

\begin{Corollary} \label{supersing} With the assumptions and notation as in Theorem~{\rm \ref{height-f}} and Corollary~{\rm \ref{alpha_A}}, we have the following.
\begin{enumerate}\itemsep=0pt
\item[$(1)$] If $p\equiv 1\pmod{d_A}$, then $h=1$.
\item[$(2)$] If there exists $\mu$ such that $p^{\mu}\equiv -1\pmod{d_A}$, then $h$ is infinite.
\end{enumerate}
\end{Corollary}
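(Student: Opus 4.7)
The plan is to apply Corollary~\ref{alpha_A}(3) and Theorem~\ref{height-f}(2) to the single distinguished element $\boldsymbol{\alpha}_A$, so essentially the entire proof is a reading off of those two statements.

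For part~(1), I would note that $p\equiv 1\pmod{d_A}$ gives $f_A=1$, so the finiteness criterion in Corollary~\ref{alpha_A}(3) reduces to checking $\|\boldsymbol{\alpha}_A\|_{d_A}\leq 1$. Combining Corollary~\ref{alpha_A}(1) at $i=0$ with the defining property $\|\boldsymbol{\alpha}_0\|=0$, this is automatic. Hence $h$ is finite, and Theorem~\ref{height-f}(2) immediately gives $h=f_A=1$.

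For part~(2), the strategy is to exhibit some $i\in[0,f_A)$ with $\|p^i\boldsymbol{\alpha}_A\|_{d_A}\geq 2$; Corollary~\ref{alpha_A}(3) will then force $h=\infty$. I would first reduce to $\mu\in[0,f_A)$: since $p^\mu\pmod{d_A}$ depends only on $\mu$ modulo $f_A$, I may replace $\mu$ by its residue. Then $p^{2\mu}\equiv 1\pmod{d_A}$ with $0\leq 2\mu<2f_A$ forces either $2\mu=0$ or $2\mu=f_A$; the first is incompatible with $p^\mu\equiv -1\not\equiv 1\pmod{d_A}$ (using $d_A\geq 3$, observed right after~\eqref{d}), so $\mu=f_A/2$. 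Setting $i:=\mu$, we obtain $p^i\boldsymbol{\alpha}_A\equiv -\boldsymbol{\alpha}_A\pmod{d_A}$. Because each coordinate $\alpha_j/e$ of $\boldsymbol{\alpha}_A$ is nonzero modulo $d_A$ (from $0<\alpha_j<d$, hence $0<\alpha_j/e<d_A$), the identity $\langle\beta/d_A\rangle+\langle-\beta/d_A\rangle=1$ applied termwise yields $\|\boldsymbol{\alpha}_A\|_{d_A}+\|-\boldsymbol{\alpha}_A\|_{d_A}=5-2=3$, and since $\|\boldsymbol{\alpha}_A\|_{d_A}=0$ I conclude $\|p^i\boldsymbol{\alpha}_A\|_{d_A}=3>1$, as required.

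The whole argument is essentially a careful unwinding of the criterion in Corollary~\ref{alpha_A}; the only mildly nontrivial step is the reduction from an arbitrary $\mu$ to $\mu=f_A/2$ in part~(2), which ensures that the witness index actually lies in the admissible range $[0,f_A)$. I expect this small arithmetic check to be the main (and only) obstacle.
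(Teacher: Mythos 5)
Your proof is correct and follows essentially the same route as the paper: part (1) is the observation that $f_A=1$ and $\|\boldsymbol{\alpha}_A\|_{d_A}=0$, and part (2) uses the identity $\|-\boldsymbol{\alpha}_A\|_{d_A}=3-\|\boldsymbol{\alpha}_A\|_{d_A}=3>1$ together with the finiteness criterion. The extra step of normalizing $\mu$ to lie in $[0,f_A)$ is just an explicit unwinding of the paper's remark that $p^{\mu}\pmod{d_A}$ belongs to $H_A$ for any $\mu$, since $H_A$ is the full cyclic subgroup generated by $p$.
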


\begin{proof} (1) If $p\equiv 1\pmod{d_A}$, then $f_A=1$ and $\| \boldsymbol{\alpha}_A \|_{d_A} =0\leq 1$. By Theorem~\ref{height-f}, $h=1$.

(2) $p^{\mu}$ is in $H_A$ and we find $\| p^{\mu} \boldsymbol{\alpha}_A \|_{d_A}= \| -\boldsymbol{\alpha}_A \|_{d_A} =3-\| \boldsymbol{\alpha}_A \|_{d_A}=3$. Hence by Theorem~\ref{height-f}, $h$ is infinite.
\end{proof}

\looseness=-1 Compared with the formal groups of $K3$ surfaces of Delsarte type in~\cite{G1}, it seems rather restrictive to have $\| p^i \boldsymbol{\alpha}_A \|_{d_A}=1$ for all $1\leq i<f_A$. This may be a reason why the infinite height occurs more often than finite height for Calabi--Yau threefolds~$\xtil_A$; see examples in Section~\ref{section5}.

\begin{Remark}For $K3$ surfaces of Delsarte type in $\PP^3(Q)$, an analogous statement to Corollary~\ref{supersing} holds, where the converse to (2) is also true (cf.~\cite{G1}). But, for a threefold~$\xtil_A$, the converse to~(2) does not hold in general. In fact, Example \ref{ex5-4} below shows that the orbit of $p\equiv 5\pmod{8}$ does not contain~$-1$, while $h$ is infinite.
\end{Remark}

\section{Calabi--Yau threefolds of weighted Fermat type}\label{section5}

In this section, we apply the results of the previous section to weighted Fermat threefolds and compute the height of the formal group of a crepant resolution $\xtil_A$. Let $X_A$ be a weighted Fermat threefold defined by the equation
\begin{gather*}
X_A\colon \ x_0^{d_0} +x_1^{d_1} +x_2^{d_2} +x_3^{d_3} +x_4^{d_4} =0
\end{gather*}
with $d_iq_i=m$ for $0\leq i\leq 4$. When $q_0+q_1+q_2+q_3+q_4 =m$, a crepant resolution $\xtil_A$ of $X_A$ is Calabi--Yau. Here Yui~\cite{Y2} has observed that there are 147 possibilities for $Q=(q_0,\dots ,q_4)$. First we restate Theorem~\ref{height-f} for weighted Fermat threefolds.

\begin{Proposition}Let $\xtil_A$ be a Calabi--Yau threefold of Fermat type in $\PP^4(Q)$ of degree $m$ with matrix $A$. Then $\boldsymbol{\alpha}_A =(q_0,q_1,q_2,q_3,q_4)$ and $d_A=m$. Furthermore, if $h=\h \Phi_{\xtil_A}$ is finite, then~$h$ is equal to the order of~$p$ modulo~$m$.
\end{Proposition}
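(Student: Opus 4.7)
The plan is to specialize the general Delsarte machinery already developed: once we identify $\boldsymbol{\alpha}_0$ for a diagonal $A$ explicitly, the formula $d_A=m$ and the height formula both drop out of Theorem~\ref{height-f}.

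First I would unpack the defining conditions for $\boldsymbol{\alpha}_0 \in \mathfrak{A}(X_A)$ in the diagonal case. Here $A=\operatorname{diag}(d_0,\dots,d_4)$ with $d_iq_i=m$, so $d=|\det A|=\prod d_i$, and the condition $\sum_i a_{ij}\alpha_i=0$ collapses to $d_j\alpha_j\equiv 0\pmod d$, equivalently $\alpha_j\in (d/d_j)\ZZ/d\ZZ=(dq_j/m)\ZZ/d\ZZ$. Writing $\alpha_j=k_j(dq_j/m)$ with representative $0<\alpha_j<d$ forces $1\leq k_j\leq d_j-1$, and $\langle\alpha_j/d\rangle=k_jq_j/m$.

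Next I would pin down $\boldsymbol{\alpha}_0$ using the normalizations. The constraint $\|\boldsymbol{\alpha}_0\|=0$ together with $0<\alpha_i<d$ becomes $\sum_{i=0}^{4} k_iq_i=m$. Since $k_i\geq 1$ and $\sum q_i=m$ by the Calabi--Yau hypothesis, the only solution is $k_i=1$ for all $i$, so
\begin{gather*}
\boldsymbol{\alpha}_0=\frac{d}{m}(q_0,q_1,q_2,q_3,q_4).
\end{gather*}
One then checks directly that $\sum\alpha_i=d$ and each $\alpha_i\neq 0\pmod d$, so indeed $\boldsymbol{\alpha}_0\in\mathfrak{A}(X_A)$, recovering the uniqueness claim in the explicit form above.

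Now I compute the invariant $e=\gcd(\alpha_0,\dots,\alpha_4,d)$. Factoring out $d/m$ gives $e=(d/m)\gcd(q_0,\dots,q_4,m)$. The well-formedness assumption $\gcd(q_0,\dots,\hat{q_i},\dots,q_4)=1$ implies in particular $\gcd(q_0,\dots,q_4)=1$, and hence $\gcd(q_0,\dots,q_4,m)=1$. Therefore $e=d/m$, which yields
\begin{gather*}
d_A=\frac{d}{e}=m \qquad\text{and}\qquad \boldsymbol{\alpha}_A=\frac{1}{e}\boldsymbol{\alpha}_0=(q_0,q_1,q_2,q_3,q_4),
\end{gather*}
as claimed. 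Finally, Theorem~\ref{height-f}(2) asserts that when $h$ is finite it equals $f_A$, the order of $p$ modulo $d_A$; substituting $d_A=m$ gives the height statement.

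The only step that is at all delicate is the uniqueness of $\boldsymbol{\alpha}_0$, and that is handled cleanly once one notes $k_i\geq 1$ together with $\sum k_iq_i=\sum q_i$ forces equality termwise; the rest is essentially bookkeeping. The well-formedness condition on $Q$ is what guarantees $\gcd(q_0,\dots,q_4,m)=1$, so that $d_A$ simplifies to $m$ rather than to a proper divisor.
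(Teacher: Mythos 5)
Your proposal is correct and follows essentially the same route as the paper: identify $\boldsymbol{\alpha}_0=(d/d_0,\dots,d/d_4)=\tfrac{d}{m}(q_0,\dots,q_4)$, use $d_iq_i=m$ and $\gcd(q_0,\dots,q_4)=1$ to get $e=d/m$, hence $d_A=m$ and $\boldsymbol{\alpha}_A=(q_0,\dots,q_4)$, and then invoke Theorem~\ref{height-f}. You supply more detail than the paper (which simply reads $\boldsymbol{\alpha}_0$ off the definition of $\mathfrak{A}(X_A)$), in particular the clean argument that $k_i\geq1$ and $\sum k_iq_i=\sum q_i$ force $k_i=1$.
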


\begin{proof}For weighted Fermat threefolds, we find that $d=\det A =d_0d_1d_2d_3d_4$ and $\boldsymbol{\alpha}_0 =(d_1d_2d_3d_4, \dots ,d_0d_1d_2d_3 )$ from the definition of $\mathfrak{A} (X_A)$. Since $d_iq_i=m$ and $\gcd (q_0,\dots ,q_4) =1$, we see $\boldsymbol{\alpha}_A =(q_0,q_1,q_2,q_3,q_4)$ and $d_A=m$. The rest of the claim follows from Theorem~\ref{height-f}.
\end{proof}

Following are some results obtained from our calculations.

\begin{Proposition} \label{h-d} Let $\xtil_A$ be a Calabi--Yau threefold of weighted Fermat type. Then the fol\-lo\-wing is a complete list of possible finite values for the height of the formal group of $\xtil_A$, where ``possible'' means that the values appear for some $\xtil_A$ in some characteristic~$p$:
\begin{gather*}
1, 2, 3, 4, 5, 6, 7, 8, 9, 10, 11, 12, 14, 16, 18, 20, 21, 22, 42.
\end{gather*}
\end{Proposition}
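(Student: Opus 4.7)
The plan is to combine the preceding proposition, which identifies $\boldsymbol{\alpha}_A = (q_0, \ldots, q_4)$ and $d_A = m$ for weighted Fermat threefolds and asserts that a finite height equals the multiplicative order $f_A$ of $p$ modulo $m$, with the finiteness criterion of Corollary~\ref{alpha_A}(3). This reduces the determination of the possible finite values of $h = \h \Phi_{\xtil_A}$ to a purely arithmetic question: for which pairs $(Q, p)$ with $Q = (q_0, \ldots, q_4)$ a Calabi--Yau weight tuple and $\gcd(p, m) = 1$ does one have $\| p^i \boldsymbol{\alpha}_A \|_m \leq 1$ for every $0 \leq i < f_A$? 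Because $m = q_0 + \cdots + q_4$ together with $m/q_i \in \ZZ_{>0}$ admits only the 147 weight tuples enumerated by Yui, this is a finite search.

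The concrete algorithm I would run is the following. For each of the 147 tuples $Q$, compute $m$; then, for each residue class $p \in (\ZZ /m\ZZ)^{\times}$, compute $f_A$ and evaluate the sums $\sum_{j=0}^{4} \langle p^i q_j / m \rangle$ for $0 \leq i < f_A$, accepting the pair exactly when every such sum lies in $\{ 1, 2 \}$ (equivalently, $\| p^i \boldsymbol{\alpha}_A \|_m \in \{ 0, 1 \}$). The collection of accepted $f_A$ values is then exactly the set of finite heights that can occur, and the proposition asserts that this set is $\{ 1, 2, \ldots, 12, 14, 16, 18, 20, 21, 22, 42 \}$. Corollary~\ref{supersing} trims the search significantly: every $p \equiv 1 \pmod m$ gives $h = 1$ uniformly, while any residue class whose orbit modulo $m$ contains $-1$ may be discarded outright.

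The main obstacle is \emph{completeness} rather than existence. Each individual height in the list is certified by exhibiting a single pair $(Q, p)$, which is straightforward, and the larger values such as $h = 21, 22, 42$ will be displayed via specific examples later in Section~\ref{section5}. However, certifying that no other value of $f_A$ ever satisfies the orbit condition requires the full systematic sweep over Yui's 147 tuples and every coprime residue class modulo the corresponding $m$; this is best handled by a computer algebra implementation, with care taken to aggregate the attained values without missing edge cases. A secondary structural remark is that the condition $\| p^i \boldsymbol{\alpha}_A \|_m \leq 1$ throughout the orbit is very restrictive, which is why the attained finite heights cluster at small values and why the list has the sparse tail $\{ 21, 22, 42 \}$ rather than filling out a full interval.
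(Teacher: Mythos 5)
Your proposal matches the paper's approach: the paper offers no written proof beyond the remark that these are ``results obtained from our calculations,'' and those calculations are exactly the finite sweep you describe --- apply the preceding Proposition to get $d_A=m$ and $\boldsymbol{\alpha}_A=(q_0,\dots,q_4)$, then for each of Yui's 147 weight tuples and each residue class $p\bmod m$ test the criterion $\| p^i\boldsymbol{\alpha}_A\|_{d_A}\leq 1$ of Theorem~\ref{height-f}/Corollary~\ref{alpha_A} and record $f_A$ when it holds. Your reduction and the completeness caveat (that the claim rests on an exhaustive machine-checked enumeration rather than a closed-form argument) are both faithful to what the paper actually does.
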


\begin{Example}Let $m=5$ and $Q=(1,1,1,1,1)$. Let $X_A$ be the weighted Fermat threefold defined by $x_0^{5} +x_1^{5} +x_2^{5} +x_3^{5} +x_4^{5} =0$. Assume $p\neq 5$. Then we have $h^{1,1}=1$, $h^{1,2}=101$, $d_A=5$ and
\begin{gather*}
h=\begin{cases}
1 & \text{if } p\equiv 1\pmod{5}, \\
\infty & \text{otherwise. }
\end{cases}
\end{gather*}
\end{Example}

\begin{Example} \label{ex5-4} Let $m=8$ and $Q=(1,1,1,1,4)$. Let $X_A$ be the weighted Fermat threefold defined by $x_0^{8} +x_1^{8} +x_2^{8} +x_3^{8} +x_4^{2} =0$. Assume $p\neq 2$. Then we have $h^{1,1}=1$, $h^{1,2}=149$, $d_A=8$ and
\begin{gather*}
h=\begin{cases}
1 & \text{if } p\equiv 1\pmod{8}, \\
2 & \text{if } p\equiv 3\pmod{8}, \\
\infty & \text{if } p\equiv 5, 7\pmod{8}.
\end{cases}
\end{gather*}
\end{Example}

\begin{Example}Let $m=966$ and $Q=(2,21,138,322,483)$. Let $X_A$ be the weighted Fermat threefold defined by $x_0^{483} +x_1^{46} +x_2^{7} +x_3^{3} + x_4^{2} =0$. Assume $p\nmid 966$. Then we have $h^{1,1}=h^{1,2}=143$, and $d_A=966$. For instance, if $p\equiv 43\pmod{966}$, then $h=22$.
\end{Example}

\begin{Example} Let $m=1806$ and $Q=(1,42,258,602,903)$. (This is the largest degree for the Fermat type.) Let $X_A$ be the weighted Fermat threefold defined by $x_0^{1806} +x_1^{43} +x_2^{7} +x_3^{3} +x_4^{2} =0$. Assume $p\nmid 1806$. Then we have $h^{1,1}=h^{1,2}=251$, $d_A=1806$ and
\begin{gather*}
h=\begin{cases}
1 & \text{if } p\equiv 1\pmod{m} \qquad \text{(1 class)}, \\
2 & \text{if } p\equiv 85,\dots \pmod{m} \qquad \text{(3 classes)}, \\
3 & \text{if } p\equiv 79,\dots \pmod{m} \qquad \text{(6 classes)}, \\
6 & \text{if } p\equiv 295,\dots \pmod{m} \qquad \text{(6 classes)}, \\
7 & \text{if } p\equiv 127,\dots \pmod{m} \qquad \text{(6 classes)}, \\
14 & \text{if } p\equiv 211,\dots \pmod{m} \qquad \text{(6 classes)}, \\
21 & \text{if } p\equiv 169,\dots \pmod{m} \qquad \text{(12 classes)}, \\
42 & \text{if } p\equiv 421,\dots \pmod{m} \qquad \text{(12 classes)}, \\
\infty & \text{otherwise } \qquad \text{(452 classes).}
\end{cases}
\end{gather*}
\end{Example}

\section{More examples}\label{section6}

Here we consider another type of polynomials. Let $X_A$ be a weighted Delsarte threefold defined by the equation:
\begin{gather*}
x_0^{m_0}x_1 +x_1^{m_1}+x_2^{m_2}+x_3^{m_3}+x_4^{m_4}=0 \subset \PP^4(Q).
\end{gather*}
It has degree $m=q_0m_0+q_1=m_1q_1=\cdots =m_4q_4$ and it may be called a {\it weighted quasi-diagonal threefold of degree $m$} (cf.~\cite{Y2}). There are $137$ weights to realize weighted quasi-diagonal threefolds. Several quantities associated with them are computed as follows.

\begin{Proposition}Let $X_A$ be a weighted quasi-diagonal threefold in $\PP^4(Q)$ defined above. Let~$\xtil_A$ be a Calabi--Yau threefold given as a crepant resolution of $X_A$. With the notation as in Corollary~{\rm \ref{alpha_A}}, set
\begin{gather*}
M=\operatorname{lcm} (m_0,m_2,m_3,m_4),
\end{gather*}
$M_i=M/m_i$, $i=0,2,3,4$, and $M_1=M-M_0-M_2-M_3-M_4$. Then
\begin{enumerate}\itemsep=0pt
\item[$(1)$] $d_A =M$ and $\boldsymbol{\alpha}_A =(M_0,M_1,M_2,M_3,M_4)$.
\item[$(2)$] Let $f_A$ be the order of $p$ modulo $M$. Then $h:=\h \Phi_{\xtil_A}$ is finite if and only if
$\| p^i \boldsymbol{\alpha}_A \|_{d_A}\leq 1$ for all~$i$, $0\leq i<f_A$.
\item[$(3)$] If $h$ is finite, then $h=f_A$.
\end{enumerate}
\end{Proposition}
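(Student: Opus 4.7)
The plan is to identify the vector $(M_0, M_1, M_2, M_3, M_4)$ with the $\boldsymbol{\alpha}_A$ of Corollary~\ref{alpha_A}, after which parts (2) and (3) are immediate specialisations of Corollary~\ref{alpha_A}(3) and Theorem~\ref{height-f}(2) respectively. For the given polynomial the matrix $A$ is diagonal apart from the single off-diagonal entry $a_{01}=1$, so $d=|\det A|=m_0m_1m_2m_3m_4$, and the defining congruences of $\mathfrak{A}(X_A)$ reduce to $m_j\alpha_j\equiv 0\pmod{d}$ for $j\in\{0,2,3,4\}$ together with $\alpha_0+m_1\alpha_1\equiv 0\pmod{d}$. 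Setting $\alpha_j:=(d/M)M_j$, the four diagonal constraints are trivial (since $m_jM_j=M$ for $j\in\{0,2,3,4\}$), and the sum condition gives $\sum\alpha_j=(d/M)\sum M_j=d$, so that $\|\boldsymbol{\alpha}_0\|=0$.

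The one nontrivial verification is the mixed relation $\alpha_0+m_1\alpha_1\equiv 0\pmod{d}$, equivalently $M_0+m_1M_1\equiv 0\pmod{M}$. Here I would use the Calabi--Yau hypothesis $m=q_0+q_1+\cdots+q_4$: the degree relations $q_im_i=m$ for $i\ge 1$ and $q_0m_0+q_1=m$ give $q_0=m(m_1-1)/(m_0m_1)$, so dividing CY by $m$ yields
\begin{gather*}
1=\frac{m_1-1}{m_0m_1}+\frac{1}{m_1}+\frac{1}{m_2}+\frac{1}{m_3}+\frac{1}{m_4}.
\end{gather*}
Multiplying through by $M$, combining the first two summands as $M(m_0+m_1-1)/(m_0m_1)$, and using the definition $M_1=M-M_0-M_2-M_3-M_4$ together with $M_i=M/m_i$ for $i\in\{0,2,3,4\}$, one extracts the clean integer identity $M_1=M(m_0-1)/(m_0m_1)$, whence $m_1M_1=M-M_0$ and thus $M_0+m_1M_1=M$ as integers. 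I expect this CY manipulation to be the main obstacle, in that it is the only place where the three hypotheses (Calabi--Yau, weighted, quasi-diagonal matrix shape) genuinely interact.

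To finish part (1) I would verify primitivity so as to identify $d_A$ with $M$: since $M=\operatorname{lcm}(m_0,m_2,m_3,m_4)$, the standard identity $\gcd(M/m_0,M/m_2,M/m_3,M/m_4)=M/\operatorname{lcm}(m_0,m_2,m_3,m_4)=1$ forces $\gcd(M_0,\ldots,M_4,M)=1$, and hence $e=\gcd(\alpha_0,\ldots,\alpha_4,d)=d/M$ and $d_A=d/e=M$. The uniqueness of the element of $\mathfrak{A}(X_A)$ with $\|\cdot\|=0$ (cited before Lemma~\ref{lambda-0}) then identifies this vector with $\boldsymbol{\alpha}_0$ and confirms $\boldsymbol{\alpha}_A=(M_0,\ldots,M_4)$. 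The routine positivity checks $1\le M_i\le M-1$ (needed for $\boldsymbol{\alpha}_0\in\mathfrak{A}(X_A)$) follow at once from $m_i\ge 2$ and from $M_1=M(m_0-1)/(m_0m_1)<M$. Parts (2) and (3) are then formal consequences of Corollary~\ref{alpha_A}(3) and Theorem~\ref{height-f}(2) respectively.
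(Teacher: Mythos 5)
Your argument is correct, but for part (1) it takes a different route from the paper: the paper simply cites Theorem~9.2 of \cite{GKY} for the identification $d_A=M$ and $\boldsymbol{\alpha}_A=(M_0,\dots,M_4)$, and then, exactly as you do, deduces (2) and (3) from Theorem~\ref{height-f} (via Corollary~\ref{alpha_A}). Your substitute for the citation is a direct verification: you write down the congruences defining $\mathfrak{A}(X_A)$ for the quasi-diagonal matrix, check that $\alpha_j=(d/M)M_j$ satisfies them, isolate the one genuinely nontrivial step --- the mixed relation $M_0+m_1M_1\equiv 0\pmod M$, which you correctly derive from the Calabi--Yau condition via $M_1=M(m_0-1)/(m_0m_1)$ --- and then use $\gcd(M/m_0,M/m_2,M/m_3,M/m_4)=M/\operatorname{lcm}(m_0,m_2,m_3,m_4)=1$ to get $e=d/M$ and $d_A=M$, invoking the uniqueness of the norm-zero element to conclude $\boldsymbol{\alpha}_0=(d/M)(M_0,\dots,M_4)$. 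I checked the computation and it is sound (the only implicit hypothesis is $m_i\geq 2$, which is needed for the positivity bounds $0<M_i<M$ and holds for all admissible quasi-diagonal threefolds). What your approach buys is self-containedness and a clear view of where the Calabi--Yau hypothesis actually enters; what the paper's citation buys is brevity and consistency with the more general framework of \cite{GKY}, where the same identification is established for $K3$-fibered quasi-diagonal families.
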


\begin{proof}See Theorem 9.2 of \cite{GKY} for claim (1). The rest is a consequence of Theorem \ref{height-f}.
\end{proof}

\begin{Proposition} \label{h-qd} Let $\xtil_A$ be a Calabi--Yau threefold arising from weighted quasi-diagonal threefold. Then the following is a complete list of possible finite values for the height of the formal group of~$\xtil_A$, where ``possible'' means that the values appear in some characteristic~$p$:
\begin{gather*}
1, 2, 3, 4, 5, 6, 7, 8, 9, 10, 11, 12, 14, 15, 16, 18, 20,
21, 22, 23, 24, 27, 28, 30, 41, 42, 46, 82.
\end{gather*}
\end{Proposition}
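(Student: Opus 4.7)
The plan is to reduce the statement to a finite combinatorial computation using the preceding Proposition. By that result, for each weighted quasi-diagonal threefold $\xtil_A$ the height $h=\h \Phi_{\xtil_A}$, when finite, equals the order $f_A$ of $p$ modulo $M$, and finiteness is governed by the concrete inequalities $\| p^i \boldsymbol{\alpha}_A \|_{d_A}\leq 1$ for $0\leq i<f_A$, where the pair $(M,\boldsymbol{\alpha}_A)$ depends only on the weight vector $Q=(q_0,\dots,q_4)$. In particular, to prove the proposition it suffices to identify exactly which values of $f_A$ can occur as $(Q,p \bmod M)$ varies.

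First I would enumerate the $137$ admissible weight vectors realizing weighted quasi-diagonal threefolds, following Yui's classification. For each $Q$ one reads off $m_0,\dots,m_4$ from the defining equation, sets $M=\operatorname{lcm}(m_0,m_2,m_3,m_4)$, and forms $\boldsymbol{\alpha}_A=(M_0,M_1,M_2,M_3,M_4)\in (\ZZ/M\ZZ)^5$. This reduces the task to $137$ explicit combinatorial inputs $(M,\boldsymbol{\alpha}_A)$, each of which is an invariant of $Q$ alone.

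Next, for each such pair I would loop over the finite group $(\ZZ/M\ZZ)^{\times}$: for each residue class $p$, compute $f_A$ and test whether $\| p^i \boldsymbol{\alpha}_A \|_M\leq 1$ for every $0\leq i<f_A$. Whenever the test succeeds, I record $h=f_A$; otherwise $h=\infty$ in that characteristic. Dirichlet's theorem guarantees that every coprime residue class is attained by infinitely many primes, so each recorded value is realized by some $\xtil_A$ in some positive characteristic. The claimed list is then precisely the union of the recorded finite values as $Q$ and $p\bmod M$ vary. For the existence half, concrete witnesses $(Q,p)$ for each listed value would be exhibited (of the type shown in the Examples of Section~\ref{section5}); the more delicate large values $h=41,46,82$ come from the highest-degree quasi-diagonal weights, where $(\ZZ/M\ZZ)^{\times}$ is large enough to accommodate such orbit lengths.

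The main obstacle is the exhaustive half: ruling out, for \emph{every} one of the $137$ weights and \emph{every} coprime residue class modulo the corresponding $M$ (some moduli reaching into the thousands), the appearance of any finite $f_A$ outside the stated set. The verification is algorithmic rather than conceptual, so the real content of the proposition is that this bounded computation terminates with exactly the displayed list; implementing it carefully, and pairing the result with the explicit witnesses, completes the proof.
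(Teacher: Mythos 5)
Your proposal is correct and follows exactly the route the paper takes (the paper states this proposition as the outcome of "our calculations," i.e., running the algorithm of Theorem~\ref{height-f}/Corollary~\ref{alpha_A} and the preceding Proposition over the $137$ quasi-diagonal weights and all residue classes modulo the corresponding $M$). Your write-up simply makes explicit the finite enumeration and the Dirichlet step that the paper leaves implicit.
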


\begin{Example} \label{ex6-3}
Let $m=84$ and $Q=(1,1,12,28,42)$. Let $X_A$ be the weighted quasi-diagonal
threefold defined by
\begin{gather*}
x_0^{83}x_1+x_1^{84}+x_2^{7}+x_3^{3}+x_4^{2}=0
\end{gather*}
in $\PP^4 (1,1,12,28,42)$. Assume $p\nmid 84$. Then $d_A =3486$, $h^{1,1}=11$,
$h^{2,1}=491$ and
\begin{gather*}
h= \begin{cases}
1 & \text{if } p\equiv 1 \pmod{3486}, \\
2 & \text{if } p\equiv 1163, 3319 \pmod{3486}, \\
 & \cdots \\
41 & \text{if } p\equiv 127, 169, 253, \dots \pmod{3486}, \\
82 & \text{if } p\equiv 43, 85, 211, \dots \pmod{3486}, \\
\infty & \text{otherwise.}
\end{cases}
\end{gather*}
\end{Example}

\begin{Remark}In Example \ref{ex6-3}, we find that $h > \min \big\{ h^{1,1}\big(\xtil_A \big)+1,h^{1,2}\big(\xtil_A \big)+1 \big\} =\min \{ 11+1$, $491+1 \}=12$ for some characteristic~$p$. According to Corollary~\ref{mirror3}, there is no group~$G$ of symplectic actions on~$X_A$ such that a crepant resolution of $Y=X_A/G$ becomes a mirror partner of~$\xtil_A$. (We note, however, that the construction in Corollary~\ref{mirror3} is very restrictive. A more general construction of mirror pairs in this direction is the Berglund--H\"ubsch--Krawitz mirror symmetry, where we may find a mirror partner of~$\xtil_A$.)
\end{Remark}

\begin{Remark} One may also compute the height {\rm $\h \Phi_{\xtil_A}$} for the following quasi-diagonal threefolds, but none of them gives a new value for the height beyond the lists of Propositions~\ref{h-d} and~\ref{h-qd}
\begin{gather*}
x_0^{m_0}x_2+x_1^{m_1}+x_2^{m_2}+x_3^{m_3}+x_4^{m_4}=0, \\
x_0^{m_0}+x_1^{m_1}x_2 +x_2^{m_2}+x_3^{m_3}+x_4^{m_4}=0, \\
x_0^{m_0}+x_1^{m_1} +x_2^{m_2}+x_3^{m_3}+x_3x_4^{m_4}=0.
\end{gather*}
From a view point of the Kreuzer--Skarke classification~\cite{KS} of invertible polynomials, these polynomials and the one in Proposition~\ref{h-qd} are of the same type (i.e., a chain of length $2$ and a~Fermat of length~$3$). Here we computed their height individually because the arithmetic properties of these polynomials are different. Since there are a few more polynomials of the same type, we should try to calculate the height for others as well.
\end{Remark}

\begin{Remark}There are also slightly more general types of weighted Delsarte threefolds. But, the height of {\rm $\Phi_{\xtil_A}$} for the following three cases is also within the lists of Propositions~\ref{h-d} and~\ref{h-qd}:
\begin{gather*}
x_0^{m_0}x_1 +x_0x_1^{m_1} +x_2^{m_2}+x_3^{m_3}+x_4^{m_4}=0, \\
x_0^{m_0}x_2 +x_1^{m_1} +x_0x_2^{m_2} +x_3^{m_3}+x_4^{m_4}=0, \\
x_0^{m_0}x_1 +x_1^{m_1}x_2 + x_0x_2^{m_2} +x_3^{m_3}+x_4^{m_4}=0.
\end{gather*}
We note that the first two of these polynomials are of the same type in the Kreuzer--Skarke classification. Since there are still other types of polynomials, it would be interesting to discuss the height of formal groups of weighted Delsarte threefolds from a view point of the Kreuzer--Skarke classification.
\end{Remark}

\vspace{-3mm}

\subsection*{Acknowledgements}

While the author was preparing for this manuscript, he visited Noriko Yui several times at the Department of Mathematics and Statistics of Queen's University and at the Fields Institute in Canada. He thanks Professor Yui for many inspiring discussions and is grateful to the two institutions for their hospitality. The author also thanks the Banff International Research Station in Canada for the workshop on Modular Forms in String Theory in 2016 where the main result of this paper was presented. Many thanks are due to the referees of the paper for useful comments and suggestions. This work was supported partially by the NSERC Discovery Grant of Noriko Yui at Queen's University in Canada and by the author's JSPS Grant-in-Aid for Scientific Research (C) 15540001 and 15K04771.

\vspace{-2.5mm}

\pdfbookmark[1]{References}{ref}
\LastPageEnding

\end{document}